\documentclass[11pt,letterpaper]{amsart}
\usepackage{amsmath,amsthm,amssymb}
\usepackage[T1]{fontenc}
\usepackage[numbered]{bookmark}
\usepackage{changepage}
\usepackage[utf8]{inputenc}
\usepackage{graphics, graphicx, caption}
\usepackage{amsmath, amsfonts, amsthm, amssymb}
\usepackage{mathrsfs, mathtools, mathabx}
\usepackage{verbatim}
\usepackage{hyperref}
\usepackage{cleveref}
\usepackage{enumerate}
\usepackage{xcolor}
\usepackage{cite}

\newtheorem{theorem}{Theorem}[section]
\newtheorem{lemma}[theorem]{Lemma}
\newtheorem{proposition}[theorem]{Proposition}

\theoremstyle{remark}

\newtheorem{remark}[theorem]{Remark}

\numberwithin{equation}{section}

\newcommand{\Z}{\mathbb{Z}}
\newcommand{\N}{\mathbb{N}}

\newcommand{\R}{\mathbb{R}}

\newcommand{\A}{\mathcal{A}}

\title[Discrepancy in the regularity of conjugacies]{A surprising discrepancy in the regularity of conjugacies between generalized interval exchange transformations and their inverses at freezing}

\author[K.\ Fr\k{a}czek]{Krzysztof Fr\k{a}czek}
\address{Faculty of Mathematics and Computer Science, Nicolaus
	Copernicus University, ul. Chopina 12/18, 87-100 Toru\'n, Poland}
\email{fraczek@mat.umk.pl}

\author[\L.\ Kotlewski]{\L ukasz Kotlewski}
\address{Faculty of Mathematics and Computer Science, Nicolaus
	Copernicus University, ul. Chopina 12/18, 87-100 Toru\'n, Poland}
\email{kotlewskilukasz@mat.umk.pl}

\subjclass[2000]{37E05, 37C15, 37A05, 37A40, 37C45, 37C83, 37D35}

\begin{document}
	
	\maketitle
\begin{abstract}
Generalized interval exchange transformations (GIETs) are semi-conjugate to interval exchange transformations (IETs) when the Rauzy–Veech combinatorics is $\infty$-complete. When this semi-conjugacy is a homeomorphism, a fundamental problem is to understand the regularity of the conjugacy and its inverse. Contrary to the usual expectation that their H\"older regularities degenerate simultaneously, we exhibit a strongly asymmetric behavior.
For self-similar IETs of hyperbolic periodic type and a natural one-parameter central family of affine IET deformations obtained via a freezing (zero-temperature) limit, the conjugacy becomes arbitrarily irregular while its inverse remains uniformly Hölder. Using thermodynamic formalism for renormalization and zero-temperature limits, we obtain sharp asymptotics for Hausdorff dimensions of invariant and conformal measures and for the supremal H\"older exponents of the conjugacy and its inverse.

%Generalized interval exchange transformations (GIETs) arise as first return maps to Poincar\'e sections for flows on compact translation surfaces, and they are semi-conjugate to interval exchange transformations (IETs) whenever their (Rauzy--Veech) combinatorial rotation number is $\infty$-complete. A central problem, going back to rigidity questions in one-dimensional dynamics, is to quantify the regularity of the conjugacy and  its inverse when a semi-conjugacy is in fact a homeomorphism. In typical settings, one expects that the H\"older regularity of a conjugacy and its inverse deteriorate simultaneously; in this paper we exhibit a markedly different phenomenon.
%For  self-similar IETs of hyperbolic periodic type and a natural one-parameter \emph{central} family of AIET deformations (a freezing/zero-temperature limit), the conjugacy becomes arbitrarily irregular while its inverse stays uniformly regular. Using  thermodynamic formalism applied to renormalization and zero-temperature convergence for locally constant potentials, we obtain precise asymptotics for Hausdorff dimensions of the associated invariant and conformal measures, as well as for the supremal H\"older exponents of the conjugacy and its inverse.
\end{abstract}

%	\begin{abstract}
%		In this work we continue research on Hausdorff dimension of conformal measures for self-similar interval exchange transformations.  In the central case, we give precise formula of the limit of dimension as temperature goes to 0. Moreover we observe that under some nondegeneracy condition, H\"older exponent of the inverse conjugacy between AIET and IET stabilizes at positive value even though exponent of the conjugacy vanishes.
%	\end{abstract}

\section{Introduction}

Poincar\'e and Herman introduced two fundamental perspectives that supports one-dimensional dynamics: Poincar\'e’s use of transversal sections and return maps reduces smooth surface flows to piecewise interval dynamics, while Herman’s rigidity program stresses that dynamical classification requires quantitative control on the regularity of conjugacies.

Generalized interval exchange transformations (GIETs) are a modern realization of Poincaré’s approach: piecewise smooth, orientation-preserving interval bijections arising as return maps for flows on higher-genus surfaces. Their Lebesgue measure-preserving examples are interval exchange transformations (IETs), which are piecewise translations, while affine interval exchange transformations (AIETs) form an intermediate class that is piecewise linear with positive slopes.

Rauzy–Veech induction associates to a GIET  a renormalization scheme and a combinatorial rotation number. By a fundamental theorem of Yoccoz, a GIET $f$ with $\infty$-complete combinatorial rotation number is semi-conjugate to an IET $T$ with the same combinatorics, via a continuous map $h$. In the absence of wandering intervals, this semi-conjugacy is a conjugacy. This leads to the rigidity problem central to Herman’s program: determining the H\"older regularity of the conjugacy $h$ and its inverse $h^{-1}$.

While a $C^1$-diffeomorphism automatically has the inverse with the same H\"older regularity, no such symmetry holds for homeomorphisms. In low regularity regimes, the H\"older behavior of a conjugacy and its inverse may differ significantly.
The aim of this paper is to exhibit natural families of affine interval exchange transformations (AIETs) for which the gap between the supremal H\"older exponents $\mathfrak{H}$ of a conjugacy $h$ and its inverse $h^{-1}$ becomes arbitrarily large. Specifically, we construct one-parameter families in which $h^{-1}$ remains uniformly H\"older, while $\mathfrak{H}(h)$ tends to zero.

We work in the self-similar setting of IETs $T$ of hyperbolic periodic type and AIETs $f$ topologically conjugate to them, with central log-slope vectors $\omega$. For a fixed IET $T$ and a log-slope of  vector $\omega$, the corresponding class od AIETs is denoted $\operatorname{Aff}(T,\omega)$. In this setting, explicit formulas for $\mathfrak{H}(h)$, $\mathfrak{H}(h^{-1})$, and for the Hausdorff dimensions of the invariant and conformal measures were obtained recently in \cite{BFKT}, where $h$ and $h^{-1}$ arise as distribution functions of these measures.

We rewrite these formulas in a thermodynamic formalism framework for Rauzy–Veech renormalization, modeled by a shift of finite type $X$ and a locally constant potential $\Phi_\omega$  determined by the log-slope vector.
Let $m_{\Phi_\omega}$ denote the Gibbs measure on $X$ associated with the potential $\Phi_\omega$. Following the framework developed in \cite{Je} and\cite{ChGU}, we use the maximal ergodic average $\overline{\Phi}_\omega$ and the minimal ergodic average $\underline{\Phi}_\omega$, as well as the $\Phi_\omega$-maximizing $\overline{X}(\Phi_\omega)\subset X$ and $\Phi_\omega$-minimizing $\underline{X}(\Phi_\omega)\subset X$ subshifts.
These objects are designed to capture the structure governing the zero-temperature limit, that is, the asymptotic behavior of the family of Gibbs measures $m_{t\Phi_\omega}$ as the inverse temperature parameter $t\to+\infty$. Using results from \cite{ChGU} on zero-temperature limits (also referred to as the freezing regime) for locally constant potentials, we describe the asymptotic behavior of the supremal H\"older exponents $\mathfrak{H}(h_{t\omega})$ of the conjugacy  and its inverse $\mathfrak{H}(h^{-1}_{t\omega})$, as well as the Hausdorff dimensions of the invariant $\mu_{t\omega}$ and conformal measures $\nu_{t\omega}$, for families of AIETs $f_{t\omega}\in\operatorname{Aff}(T,t\omega)$ in the self-similar setting as $t\to+\infty$.
In this thermodynamic language, denoting by $h_{top}$  the topological entropy of the system, the main result of the paper can be stated as follows:

\begin{theorem}
Suppose that $T$ is an IET of hyperbolic periodic type and $\omega\in\R^{\mathcal A}$ is a log-slope vector of central type. Let us consider the family of AIETs $f_{t\omega}\in\operatorname{Aff}(T,t\omega)$, for $t\in\R$. Then,
\begin{gather*}
\lim_{t\to +\infty} \mathfrak{H}(h^{-1}_{t\omega}) = \frac{h_{top}(\overline{X}(\Phi_\omega))}{h_{top}(X)}=\lim_{t\to +\infty} \dim_H(\nu_{t\omega}),\quad\text{and}\\\
\lim_{t\to +\infty} t\,\mathfrak{H}(h_{t\omega}) =\frac{h_{top}(X)}{\overline{\Phi}_\omega-\underline{\Phi}_\omega} <\frac{h_{top}(X)}{\overline{\Phi}_\omega-\int_X\Phi_\omega\,d\,m_{\Phi_0}}=\lim_{t\to +\infty} t\dim_H(\mu_{t\omega}).
\end{gather*}
\end{theorem}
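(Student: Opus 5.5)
The plan is to take the explicit formulas of \cite{BFKT} for $\mathfrak{H}(h_\omega)$, $\mathfrak{H}(h^{-1}_\omega)$, $\dim_H(\mu_\omega)$ and $\dim_H(\nu_\omega)$, recast them as thermodynamic quantities for the Rauzy--Veech shift of finite type $X$ with potential $\Phi_\omega$, and then apply the zero-temperature results of \cite{ChGU} for locally constant potentials.

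\textbf{Step 1: thermodynamic dictionary.} Let $P(t)=P(t\Phi_\omega)$ be the pressure on $X$. With the normalization in which the Lebesgue (IET) side corresponds to $\Phi_0$, we have $P(0)=h_{top}(X)$ and $m_{\Phi_0}$ is the measure of maximal entropy. For $\omega$ central, I expect the formulas of \cite{BFKT} to take the following form. The dimensions of $\nu_{t\omega}$ and $\mu_{t\omega}$ are ratios of entropy to Lyapunov-type integrals of the Gibbs measures. One is
\[
\dim_H(\nu_{t\omega})=\frac{h_{m_{t\Phi_\omega}}}{h_{top}(X)} .
\]
The other is $\dim_H(\mu_{t\omega})=h_{top}(X)/\bigl(\int\Phi\,dm_{t\Phi_\omega}-\int\Phi\,dm_{\Phi_0}\bigr)$ up to the correct normalization. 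Here I use centrality to kill the Lebesgue and expanding contributions, so that only the central potential $\Phi_\omega$ survives. The H\"older exponents are extremal versions of these, namely infima over invariant (equivalently periodic) measures $m$ of local ratios. For instance, $\mathfrak{H}(h_{t\omega})$ should be the infimum over $m$ of $h_{top}(X)$ divided by the pressure gap $P(t)-t\int\Phi_\omega\,dm-\dots$, and $\mathfrak{H}(h^{-1}_{t\omega})$ the reciprocal-type expression. This step is bookkeeping once the formulas are fixed. The sign conventions deciding which of $\overline\Phi$, $\underline\Phi$ appears where must be checked carefully.

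\textbf{Step 2: asymptotics of pressure and Gibbs measures.} As $t\to+\infty$ we use the standard facts:
\[
P(t)=t\overline{\Phi}_\omega+h_{top}(\overline X(\Phi_\omega))+o(1).
\]
For locally constant potentials on a subshift of finite type, \cite{ChGU} gives that $m_{t\Phi_\omega}$ converges to a measure $m_\infty$. This limit is supported on $\overline X(\Phi_\omega)$, with $h_{m_{t\Phi_\omega}}\to h_{m_\infty}=h_{top}(\overline X(\Phi_\omega))$ and $\int\Phi_\omega\,dm_{t\Phi_\omega}\to\overline\Phi_\omega$. Substituting into Step 1 gives the $\nu$ limit, $h_{top}(\overline X)/h_{top}(X)$. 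It also gives $t\dim_H(\mu_{t\omega})\to h_{top}(X)/(\overline\Phi_\omega-\int\Phi_\omega\,dm_{\Phi_0})$, since the denominator behaves like $t(\overline\Phi_\omega-\int\Phi_\omega dm_{\Phi_0})+O(1)$.

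\textbf{Step 3: the H\"older exponents.} For $h_{t\omega}$, the worst-case measure in the variational expression is the one minimizing $\int\Phi_\omega\,dm$. The gap is then $t(\overline\Phi_\omega-\underline\Phi_\omega)+O(1)$, which yields $t\,\mathfrak{H}(h_{t\omega})\to h_{top}(X)/(\overline\Phi_\omega-\underline\Phi_\omega)$. The delicate point is exchanging the limit $t\to\infty$ with the infimum over invariant measures. I would handle this using compactness of the space of invariant measures and uniform bounds for entropy, which lies in $[0,h_{top}(X)]$. For $h^{-1}_{t\omega}$ the relevant quotient has the pressure in the numerator. The leading terms $t\overline\Phi_\omega$ cancel against $t\int\Phi_\omega\,dm$ exactly on $\overline X$, and this is the main obstacle. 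One must show that the infimum is asymptotically attained on measures supported near $\overline X(\Phi_\omega)$, and that the resulting value is $h_{top}(\overline X)/h_{top}(X)$. In particular, it must agree with the $\nu$ limit rather than degenerate, which again needs an argument on the compact space of invariant measures. Measures away from $\overline X$ contribute quotients tending to $1$ or larger, so they are irrelevant.

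\textbf{Step 4: strict inequality.} The strict inequality is equivalent to $\int\Phi_\omega\,dm_{\Phi_0}>\underline\Phi_\omega$. This holds because $m_{\Phi_0}$ is fully supported, and $\Phi_\omega$ is locally constant and not cohomologous to a constant, since $\omega$ is nonzero central. An equality would force $m_{\Phi_0}$ to be $\Phi_\omega$-minimizing, hence $\Phi_\omega$ to be cohomologous to the constant $\underline\Phi_\omega$, contradicting the assumptions.
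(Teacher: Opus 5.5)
Your architecture matches the paper's: take the \cite{BFKT} formulas, rewrite them as pressure data of $t\Phi_\omega$ on $X$, then apply the freezing asymptotics of \cite{ChGU}. The gap is Step~1. You call it bookkeeping, but it is where the content of the proof lies, and you only guess it. Two identities are needed.
\begin{enumerate}
\item The quantity in \cite{BFKT} is $\rho(t)$, the log of the Perron--Frobenius eigenvalue of the $\mathcal A\times\mathcal A$ matrix $M(t\omega)$. It is not a pressure on $X$, so you must prove $\rho(t)=P(t\Phi_\omega)$. The paper (Lemma~\ref{lem:rhorho}) factors $M^T(\omega)=D(\omega)E$ and $\mathcal M(\Phi_\omega)=E\,D(\omega)$, then transports eigenvectors. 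This also yields $\rho(0)=h_{top}(X)$ and $\rho'(0)=\int_X\Phi_\omega\,d\,m_{\Phi_0}$.
\item \cite{BFKT} give $\mathfrak H(h_\omega)=\rho(0)/\overline\vartheta_\omega$ and $\mathfrak H(h^{-1}_\omega)=\underline\vartheta_\omega/\rho(0)$, in terms of extremal loop averages of $\vartheta_\omega$. You need that $\vartheta_\omega$ is cohomologous to $\rho(\omega)-\Phi_\omega$. This comes from $\nu_\omega(I^{(n)}_\alpha)=e^{-\rho(\omega)}\nu_\omega(I_\alpha)$, with transfer function $\log\nu_\omega(I_{\alpha_0})$ where $x_0=(\alpha_0,i_0)$. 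It gives $\overline\vartheta_{t\omega}=\rho(t)-t\underline\Phi_\omega$ and $\underline\vartheta_{t\omega}=\rho(t)-t\overline\Phi_\omega$.
\end{enumerate}
Without these, your formulas are conjectural, and one is wrong. In fact $\dim_H(\mu_{t\omega})=h_{top}(X)/\bigl(P(t)-t\int_X\Phi_\omega\,d\,m_{\Phi_0}\bigr)$, not the expression $h_{top}(X)/\bigl(\int\Phi_\omega\,d\,m_{t\Phi_\omega}-\int\Phi_\omega\,d\,m_{\Phi_0}\bigr)$ you wrote. Your Step~2 silently uses the correct one. Your $\nu$-formula is right, because $\rho(t)-t\rho'(t)$ is the entropy of $m_{t\Phi_\omega}$.

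Once the exact formulas are in place, Step~3 is immediate. For every $t\ge 0$,
\[
\mathfrak H(h_{t\omega})=\frac{h_{top}(X)}{P(t)-t\underline\Phi_\omega},\qquad \mathfrak H(h^{-1}_{t\omega})=\frac{P(t)-t\overline\Phi_\omega}{h_{top}(X)}.
\]
So your infimum over invariant measures is attained, for each $t$, at a $\Phi_\omega$-minimizing (respectively maximizing) measure. No interchange of limit and infimum and no compactness argument are needed. The $h^{-1}$ limit is just $P(t)-t\overline\Phi_\omega\to h_{top}(\overline X(\Phi_\omega))$. Your ``main obstacle'' is an artifact of the missing formula, and as written you leave it unresolved.

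There are also two smaller points.
\begin{enumerate}
\item Entropy is only upper semicontinuous, so $h_{m_{t\Phi_\omega}}\to h_{top}(\overline X(\Phi_\omega))$ needs an argument. One option is the variational bound $h_{m_{t\Phi_\omega}}\ge h_{top}(\overline X(\Phi_\omega))$ combined with upper semicontinuity. Another is $h_{m_{t\Phi_\omega}}=P(t)-tP'(t)$ with the exponential error terms of Propositions~\ref{prop:rhozero} and~\ref{prop:rhoder}.
\item In Step~4, the claim that $\Phi_\omega$ is not cohomologous to a constant ``since $\omega$ is nonzero central'' is unproved. The quickest fix is \cite{BFKT} itself. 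By the formulas above at $t=1$, the strict inequality $\mathfrak H(h_\omega)<\dim_H(\mu_\omega)$ in \eqref{eq:Hexp} is exactly $\underline\Phi_\omega<\rho'(0)=\int_X\Phi_\omega\,d\,m_{\Phi_0}$. That is precisely the strict inequality in the statement.
\end{enumerate}
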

As a consequence, the supremal H\"older exponent of the conjugacy $h_{t\omega}$ between $f_{t\omega}$ and $T$ converges to zero as $t\to+\infty$, with a precisely determined decay rate. In contrast, the supremal H\"older exponent of the inverse conjugacy $h^{-1}_{t\omega}$ remains uniformly bounded from below by a constant which, as we show, is equal to $\frac{h_{top}(\overline{X}(\Phi_\omega))}{h_{top}(X)}$.
In the final section, we present an explicit example for which this constant is positive and compute its value. In this way, we construct a concrete one-parameter family of AIETs that exhibits the previously announced disparity between the regularity of the conjugacy and that of its inverse announced earlier.

\section{GIETs, AIETs and their conjugacy with IETs}\label{sec:conj}
One of the main objects investigated during the study of flows on compact surfaces of higher genus $g$ is \emph{generalized interval exchange transformations} (GIETs), that is, piecewise smooth (and increasing) bijections of the interval, with $d=2g+\kappa-1$ continuity intervals, where $\kappa$ is the number of singularities (fixed points) of the flow. They appear naturally as the first return maps to a Poincar\'e section of such a flow, i.e., as returns to an interval $I$ that is transversal to the  flow. With each GIET $f:I\to I$ we associate its combinatorial data, namely a finite alphabet $\mathcal{A}$ that labels the intervals before the rearrangement $(I_\alpha)_{\alpha\in\mathcal A}$ and the intervals after the rearrangement $(f(I_\alpha))_{\alpha\in\mathcal A}$, as well as a pair of bijections $\pi=(\pi_t,\pi_b)$, called a permutation, that describe the position of each interval before $\pi_t:\mathcal A\to\{1,\ldots,d\}$ and after the rearrangement $\pi_b:\mathcal A\to\{1,\ldots,d\}$. One of the key classes of GIETs are \emph{affine interval exchange transformations} (AIETs), which are piecewise linear GIETs with a positive constant derivative on each exchanged interval. If, in addition, the derivative is always equal to $1$, then we obtain \emph{interval exchange transformations} (IETs). The latter mappings arise when considering flows that preserve an area measure on surfaces.

Each interval exchange transformation $T:I\to I$ is uniquely determined by its combinatorial data $\pi$ and numerical data, that is, by the vector of lengths of the exchanged intervals $\lambda=(\lambda_\alpha)_{\alpha\in\mathcal A}\in\R^\mathcal{A}_{>0}$ given by $\lambda_\alpha=|I_\alpha|$ for $\alpha\in\mathcal A$. Therefore, the IET $T$ is identified with the pair $(\pi,\lambda)$.
In turn, each AIET $f:I\to I$ is uniquely determined by the pair $(\pi,\lambda)$ together with the vector of logarithms of the slopes $\omega=(\omega_\alpha)_{\alpha\in\mathcal A}$ given by $\omega_\alpha=\log Df|_{I_\alpha}$ for $\alpha \in \mathcal{A}$. Therefore, the AIET $f$ is identified with the triple $(\pi,\lambda,\omega)$.

One of the main tools to study GIETs is the \emph{Rauzy-Veech induction}. For every GIET $f:I\to I$ such that the last interval before the rearrangement $I_{\pi^{-1}_t(d)}$ is different from the last interval after the rearrangement $f(I_{\pi^{-1}_b(d)})$, we define $\mathcal{RV}(f):J\to J$
as the first return map to $J$ the longer of intervals $I\setminus I_{\pi^{-1}_t(d)}$ or $I\setminus f(I_{\pi^{-1}_b(d)})$. If $I\setminus I_{\pi^{-1}_t(d)}$ is the longer of the two, then we say that $f$ is of \emph{bottom type}, $\pi^{-1}_b(d)\in \mathcal{A}$ is a \emph{winner} of the induction step, and $\pi^{-1}_t(d)\in\mathcal{A}$ is a \emph{loser}. If, on the other hand, $I\setminus f(I_{\pi^{-1}_b(d)})$ is the longer of the two, then we say that $f$ is of \emph{top type}, $\pi^{-1}_t(d)$ is a \emph{winner} of the induction step, and $\pi^{-1}_b(d)$ is a \emph{loser}.  We say that $f$ is \emph{infinitely renormalizable} if this procedure can be continued indefinitely. Assuming that $f$ is infinitely renormalizable, we obtain a sequence of GIETs $\{\mathcal{RV}^k(f):I^{(k)}\to I^{(k)}\mid\,k\geq 0\}$ and a sequence of corresponding permutations $\{\pi^{(k)}\}_{k\geq 0}$.

A \emph{Rauzy graph} $\mathcal{RG(A)}$ is a directed labeled graph whose vertices are permutations of $\mathcal{A}$ and the edges lead from permutations to the possible images via $\mathcal{RV}$, which are labeled $t$ or $b$ depending on the type of renormalization.  For a given infinitely renormalizable GIET $f$, the infinite path $\gamma(f)=\gamma$ in $\mathcal{RG(A)}$ obtained by connecting the consecutive entries of $\{\pi^{(k)}\}_{k\in\N}$ is called the \emph{(combinatorial) rotation number} of $f$. Moreover, we say that $\gamma$ is \emph{$\infty$-complete} if every symbol $\alpha\in\A$ is a winner infinitely many times, see \cite{Yo05}.

As shown by Yoccoz \cite[Proposition 4]{Yo05}, the combinatorial rotation number of any infinitely renormalizable IET is $\infty$-complete. Moreover, by Corollary~5 in \cite{Yo05}, there exists at least one infinitely renormalizable IET associated with any $\infty$-complete path. Next, by Proposition~7 in\cite{Yo05}, any GIET $f$, whose combinatorial rotation number is $\infty$-complete, is semi-conjugate (via a continuous  map $h:I\to I$) to an IET $T$ with the same combinatorial rotation number; that is, $h\circ f=T\circ h$. The semi-conjugacy $h$ is a conjugacy (it is a homeomorphism) if the GIET has no wandering intervals, see Remark after the proof of Proposition~7 in \cite{Yo05}. One of the key questions in this theory is how regular (smooth) the conjugacies are, at the H\"older scale, and how regular their inverses are, provided they exist.
The main goal of the paper is to understand how large disparity between the regularity of a conjugacy and that of its inverse can be. In the case where the conjugacy $h$  is at least a $C^1$-diffeomorphism (which is the situation most commonly expected), a simple argument based on the formula for the derivative of the inverse function shows that the maximal regularities of $h$ and $h^{-1}$
are the same, so the aforementioned disparity does not occur. However, when $h$ is not differentiable, it is not immediately clear what the relationship is between the maximal regularities of $h$ and $h^{-1}$.
In this paper, we identify specific (one-parameter) families of GIETs (in fact, AIETs) for which the disparity between the regularity of $h$ and $h^{-1}$
can be made arbitrarily large as the parameter tends to infinity.

From now on, we will focus mainly on AIETs.
For every IET $T=(\pi,\lambda)$ and every $\omega\in \R^{\A}$, we denote by $\operatorname{Aff}(T,\omega)$ the set of AIETs semi-conjugated to $T$ with the log-slope $\omega$. As shown in \cite[Proposition 2.3]{MMY},
 $\operatorname{Aff}(T,\omega)$ is not empty if, and only if  $\langle \omega, \lambda \rangle = 0$. If the log-slope vector $\omega$ is additionally a central–stable vector (with respect to the Kontsevich–Zorich cocycle), then for almost every IET $T$, the semi-conjugacy between any $f\in\operatorname{Aff}(T,\omega)$ and $T$ is a homeomorphism, see \cite[Theorem 1]{TU} also for self-similar IETs. On the other hand, if the log-slope vector $\omega$ is an unstable vector, then $f\in\operatorname{Aff}(T,\omega)$  may have wandering intervals, which means that the semi-conjugacy $h$ is not invertible, see \cite{BHM} for self-similar IETs and \cite{MMY} for a.a.\ IETs.

The first significant step toward understanding the regularity of conjugacies is a result of Cobo \cite{Co}, which takes the form of the following dichotomy. For almost every IET $T$, if
$f\in\operatorname{Aff}(T,\omega)$ is conjugate to $T$ via $h:I\to I$ and the log-slope vector $\omega$ is a stable vector, then $h$ is a $C^1$-diffeomorphism; whereas if $\omega$ is not a stable vector, then
$h$ is not even an absolutely continuous function. The methods developed recently in \cite{BFKT} suggest that in the case where $\omega$ is not stable, neither the conjugacy nor its inverse (if it exists) is a H\"older function.
Thus, when studying AIETs with typical combinatorial rotation numbers, we are unlikely to observe the aforementioned disparity between the regularity of $h$ and $h^{-1}$, since the supremal H\"older exponents of both functions are simultaneously equal either to zero or to $1+\delta$.

This suggests that, in order to observe such a disparity in regularity, we should restrict ourselves to very specific IETs. From this point of view, self-similar IETs appear to be the most natural choice, and this is precisely what we do.

To conclude this section, it is worth mentioning recent breakthrough results of Ghazouani–Ulcigrai \cite{GU23,GU25} concerning the regularity of conjugacies between GIETs and IETs in the case of exchanges of a small number of intervals, namely $4$ or $5$. In this setting, for almost every IET, the conjugacy is a $C^{1+\delta}$-diffeomorphism, provided that the GIET satisfies the vanishing boundary operator condition. This latter condition arises naturally when starting from minimal flows on surfaces. A similar result, asserting $C^1$-conjugacy between arbitrary GIETs having the same and typical combinatorial rotation number (together with some additional natural assumptions), was recently obtained by Berk–Trujillo \cite{BT}. Both results also suggest the absence of a disparity in regularity in typical situations, even when we broaden our perspective to include GIETs and conjugacies between them.

\section{Self-similar IETs and corresponding AIETs}\label{sec:selfsim}
An IET $T=(\pi,\lambda)$ is \emph{self-similar} (or is of \emph{periodic type}), if there exists $n\ge 1$ such that $\mathcal{RV}^n(T):I^{(n)}\to I^{(n)}$ is conjugate to $T:I\to I$ via a rescaling $R:I^{(n)}\to I$ given by $R(x)=e^{\rho(T)}x$, where $\rho(T):=\log\frac{|I|}{|I^{(n)}|}$. Equivalently, this means that the combinatorial rotation number is a periodic path. Let us consider the related Rokhlin tower decomposition. Namely, for every $\alpha\in\A$ we denote by $q^{(n)}_{\alpha}$ the first return time of the interval $I^{(n)}_{\alpha}$ to $I^{(n)}$ via $T$, here $\{I^{(n)}_\alpha\}_{\alpha\in\mathcal A}$ are subintervals of $I^{(n)}$ exchanged by $\mathcal{RV}^n(T)$. Then, the whole interval $I$ is decomposed into the disjoint union of towers of intervals:
\begin{equation}\label{eq:towers}
I=\bigsqcup_{\alpha\in\A}\bigsqcup_{i=0}^{q^{(n)}_{\alpha}-1}T^i I^{(n)}_{\alpha}.
\end{equation}
Moreover, this partition is finer than the partition of $I$  into exchanged intervals $\{I_\alpha\}_{\alpha\in\mathcal A}$.
Let $M(T)=M=[M_{\alpha\beta}]_{\alpha,\beta\in\mathcal A}\in SL_{\A}(\Z)$ be the self-similarity matrix of $T$, that is,
\[M_{\alpha\beta}:=\# \{0\leq k < q^{(n)}_\alpha \mid T^k(I_{\alpha}^{(n)}) \subset I_{\beta} \}.\]
Then the vector $\lambda$ is a left Perron-Frobenius eigenvector and  $e^{\rho(T)}$ is the Perron-Frobenius eigenvalue of $M$.
If the matrix $M(T)$ is primitive, has exactly $g \geq 1$ real simple eigenvalues greater than (and smaller than) $1$ with different moduli and $\kappa-1$ unit ones, we say that $f$ is of \emph{hyperbolic periodic type}.
Recall that $g$ is the genus and $\kappa$ is the number of singularities of the translation surfaces associated to the permutation $\pi$. Moreover, every self-similar IET is uniquely ergodic.
If $T$ is of hyperbolic periodic type, then the right eigenvectors of $M$ form a basis of $\R^{\A}$; we have exactly $g$ expanding, $g$ contracting, and $\kappa-1$ invariant (fixed) eigenvectors.

In what follows we deal with AIETs $f$ belonging to $\operatorname{Aff}(T,\omega)$, where $\omega\in\R^{\A}$ is the log-slope vector of $f$ and $T$ is of hyperbolic periodic type.
Then $\langle \omega,\lambda\rangle=0$ and $\omega$ cannot have the right Perron-Frobenius eigenvector as a component in its basis decomposition. We say that $\omega$ is of
\begin{itemize}
 \item \emph{unstable type}, if in the basis decomposition it has at least one expanding (non-Perron-Frobenius) eigenvector,
 \item \emph{central-stable type}, if in the basis decomposition it has at least one invariant but no expanding eigenvectors.
 \item \emph{stable type}, if in the basis decomposition it has only contracting eigenvectors.
\end{itemize}
This classification is of fundamental importance for a complete understanding of the regularity of conjugacies, as was recently shown in \cite{BFKT}. For any function $h$ on an interval let $\mathfrak{H}(h)$ be the supremal  H\"older exponent, i.e.\
\[
\mathfrak{H}(h):=\sup\{\alpha\in\R_{\ge 0}\mid \text{$D^{\lfloor\alpha\rfloor}f$ is $\{\alpha\}$-H\"older} \}.
\]
\begin{theorem}[Theorems~1.3 and 1.6 in \cite{BFKT}] \label{thm: main2}
 Let $f\in \operatorname{Aff}(T,\omega)$ be an AIET semi-conjugated to a self-similar IET $T=(\pi,\lambda)$ of hyperbolic periodic type, with the log-slopes vector $\omega$. Let $h:I\to I$ be the semi-conjugacy between $f$ and $T$. Then
 \begin{enumerate}
 \item \label{numb:Thm 1.3 (1)}$\mathfrak{H}(h)=\mathfrak{H}(h^{-1})=1+\alpha$ for some $\alpha\in(0,1)\cup\{+\infty\}$, if $\omega$ is of stable type;
 \item \label{numb:Thm 1.3 (2)}$0< \mathfrak{H}(h),\mathfrak{H}(h^{-1})<1$, if $\omega$ is of central-stable type;
 \item \label{numb:Thm 1.3 (3)}$\mathfrak{H} (h)=0$, i.e. $h$ is not H\"older, if $\omega$ is of unstable type.
 \end{enumerate}
\end{theorem}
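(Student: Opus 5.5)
This statement is quoted from \cite{BFKT} (Theorems~1.3 and 1.6 there), so within the paper it is simply invoked. If I were to reprove it, I would use the self-similar structure to reduce everything to the growth of the slope cocycle along the Rauzy--Veech period. Let $n$ be the period and $M=M(T)$ the self-similarity matrix. The return map $\mathcal{RV}^{kn}(f)$ to the interval $J^{(kn)}=h^{-1}(I^{(kn)})$ is again an AIET with combinatorics $\pi$, and its log-slope vector is $M^k\omega$. This holds because each exchanged subinterval of $\mathcal{RV}^{kn}(f)$ travels through the tower \eqref{eq:towers}, visiting $I_\beta$ exactly $(M^k)_{\alpha\beta}$ times. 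On the other side, the lengths of the $T$-tower floors at level $k$ are $e^{-k\rho(T)}\lambda$. The lengths of the corresponding $f$-tower floors are $|J^{(kn)}|$ times a bounded factor, scaled by Birkhoff sums of $\omega$ of the form $e^{(S\omega)}$, where $S$ is a partial sum along the orbit. So both $h$ and $h^{-1}$ are distribution functions of measures whose masses on the dyadic-like tower partitions are explicitly computable.

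\textbf{Estimating $|J^{(kn)}|$.} The first step is to decompose $\omega$ in the eigenbasis of $M$ and estimate $|J^{(kn)}|$ in each of the three cases.
\begin{itemize}
\item \emph{Stable type.} $M^k\omega\to0$ exponentially, so the slopes of the renormalized AIETs converge to $1$ at rate $\theta^k$, where $\theta<1$ is the largest modulus of a contracting eigenvalue occurring in $\omega$. The standard telescoping argument then gives that $\log Dh$ exists and is H\"older with exponent $\alpha=\min(1,\log\theta^{-1}/\rho(T))$, or $\alpha=\infty$ if $\omega=0$. Since $Dh$ is bounded away from $0$, the inverse has the same exponent, which gives item (1).
\item \emph{Central-stable type.} $M^k\omega$ stays bounded and converges to a fixed vector $\omega_c\neq0$. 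The slopes at scale $k$ are then products of $e^{\pm C}$ over $k$ steps. The measure $\mu=dh$ gives mass $\approx e^{-k\rho(T)}$ to a floor of $f$-length $\approx\exp(-k\rho(T)+S_k)$, where $S_k$ is an additive functional of the symbolic path of the floor that grows linearly with rate bounded between two finite constants. Comparing the exponents gives $\mathfrak H(h)=\inf$ over paths of $\rho/(\rho-\text{rate})$ in the appropriate normalisation. This quantity lies in $(0,1)$, because the rate takes both signs (recall $\langle\omega,\lambda\rangle=0$). The same holds for $h^{-1}$ with the roles of the two measures swapped, which gives item (2).
\item \emph{Unstable type.} $M^k\omega$ grows exponentially. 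Some floors then have length ratios that are super-exponentially small relative to the $T$-floors, which kills every H\"older exponent of $h$ and gives item (3).
\end{itemize}

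\textbf{Main obstacle.} The hard step is passing from estimates on the tower partitions to genuine H\"older exponents for arbitrary pairs of points, and proving sharpness. The upper bound on regularity is easy: exhibit a nested sequence of floors along a periodic path realising the extreme rate. The lower bound needs bounded-distortion control. Any interval $[x,y]$ is covered by boundedly many floors of comparable level, using the bounded combinatorics of the primitive periodic path. Primitivity of $M$ gives that adjacent floors of the same level have comparable measure up to a constant $e^{C}$ independent of $k$. This step fails in the unstable case, which is exactly why only $\mathfrak H(h)=0$, and not a statement about $h^{-1}$, is claimed there.
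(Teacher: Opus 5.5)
The paper does not prove this statement; it quotes it from \cite{BFKT}. Your sketch therefore has to stand on its own, and item (1) contains a genuine error. You set $\alpha=\min(1,\log\theta^{-1}/\rho(T))$, with $\alpha=\infty$ only for $\omega=0$. By the symplectic symmetry of the spectrum, every eigenvalue of $M$ has modulus at least $e^{-\rho(T)}$, so the cap is vacuous. However, the extreme value $\theta=e^{-\rho(T)}$ does occur in the stable space. Let $\delta$ be the translation vector of $T$, so $Tx=x+\delta_\alpha$ on $I_\alpha$. Comparing $\mathcal{RV}^n(T)x=T^{q^{(n)}_\alpha}x=x+(M\delta)_\alpha$ with $\mathcal{RV}^n(T)=R^{-1}TR$ on $I^{(n)}_\alpha=R^{-1}I_\alpha$ gives $M\delta=e^{-\rho(T)}\delta$, and hence $\langle\delta,\lambda\rangle=0$.

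For $\omega=c\,\delta$ your formula yields $\alpha=1$, i.e.\ $\mathfrak H(h)=2$. The statement excludes this value, and it is also false. Put $\psi_c(x)=|I|(e^{cx}-1)/(e^{c|I|}-1)$. Then $\psi_c\circ T\circ\psi_c^{-1}$ has slope $e^{c\delta_\alpha}$ on $\psi_c(I_\alpha)$, so it lies in $\operatorname{Aff}(T,c\,\delta)$. Its conjugacy $h=\psi_c^{-1}$ is real-analytic, so $\mathfrak H(h)=+\infty$.

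Your telescoping argument only gives the lower bound $\mathfrak H(h)\ge 1+\log\theta^{-1}/\rho(T)$. To reach the statement you need three further steps:
\begin{enumerate}
\item Show that the $e^{-\rho(T)}$-component of $\omega$ does not limit regularity.
\item Take $\theta$ to be the largest modulus among the remaining contracting eigenvalues occurring in $\omega$. These moduli are distinct, so they lie strictly in $(e^{-\rho(T)},1)$; this is exactly why $\alpha\in(0,1)$.
\item Prove sharpness: show that this component really produces oscillation of $\log Dh$ of order $\theta^k$ at scale $e^{-k\rho(T)}$.
\end{enumerate}
None of this is in your sketch.

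In item (2) the normalisation is off and the strict inequalities are not justified. In the central case $\mathcal{RV}^n(f)$ is a rescaled copy of $f$, so the base $J^{(kn)}$ of the $f$-towers shrinks like $e^{-k\rho(\omega)}$, not like $e^{-k\rho(T)}$. With this correction, and recalling $\rho(0)=\rho(T)$, your comparison of exponents gives $\rho(0)/(\rho(\omega)-\underline\Phi_\omega)$ and $(\rho(\omega)-\overline\Phi_\omega)/\rho(0)$. This is \eqref{eq:Hexp} combined with \eqref{eq:vartheta}.

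That these numbers lie in $(0,1)$ does not follow from $\langle\omega,\lambda\rangle=0$, which holds for stable $\omega$ too. Counting level-$k$ floors with $\sum_F|F|=|I|$ only gives $\underline\Phi_\omega\le\rho(\omega)-\rho(0)\le\overline\Phi_\omega$. Strictness needs the variational principle together with the fact that $\Phi_\omega$ is not cohomologous to a constant. Positivity of $\mathfrak H(h^{-1})$ additionally needs $\rho(\omega)>\overline\Phi_\omega$.

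Finally, your last paragraph claims that primitivity makes adjacent floors of the same level comparable in measure. This is unjustified. The level-$k$ floors on the two sides of a common endpoint are nested along two different symbolic paths. These paths may have different $\Phi_\omega$-averages, in which case the mass ratio grows like $e^{ck}$.

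Fortunately the claim is unnecessary.
\begin{itemize}
\item For $h^{-1}$: all level-$k$ $T$-floors have length $\asymp e^{-k\rho(T)}$ and $\nu_\omega$-mass at most $Ce^{-k\underline\vartheta_\omega}$, which is enough.
\item For $h$: here the $f$-floors of one level have incomparable lengths, so pick $k$ minimal such that $[x,y]$ contains a whole level-$k$ $f$-floor. Then $[x,y]$ meets at most two level-$(k-1)$ floors, so $\mu_\omega([x,y])\le Ce^{-k\rho(T)}$, while $|x-y|\ge ce^{-k\overline\vartheta_\omega}$.
\end{itemize}
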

The precise formulas for the H\"older exponents in the central–stable case will be recalled later. In fact, as shown in \cite{BFKT}, the central–stable case reduces to the central case, that is, when the log-slope vector
$\omega$ is right invariant $M\,\omega=\omega$, and from now on we will focus only on this case.

For any vector $\omega\in\R^{\mathcal A}$ let $\phi_\omega:I\to\R$ be the associated piecewise constant map such that $\phi_\omega(x)=\omega_\alpha$ for $x\in I_\alpha$. For any map $\phi:I\to\R$ and $k\geq 0$, let $S_k\phi$ be its $k$-th Birkhoff sum, i.e.\ $S_k\phi(x)=\sum_{0\leq i<k}\phi(T^ix)$. Let
\begin{equation}\label{eq:sigma}
\Sigma:=\{(\alpha,i)\mid \alpha \in\mathcal A,\,0\leq i<q^{(n)}_{\alpha}\}
\end{equation}
be the alphabet parameterizing the partition of $I$ into intervals of the towers \eqref{eq:towers}. For any $(\alpha,i)\in\Sigma$ let $\beta=\beta(\alpha,i)\in\mathcal A$ be such that $T^iI^{(n)}_\alpha\subset I_\beta$, and let
\[\omega_{(\alpha,i)}:=S_{i}\phi_\omega|_{I^{(n)}_\alpha}=\sum_{0\leq j<i}\omega_{\beta(\alpha,j)}.\]
For any $M$-invariant vector $\omega$ let  $M(\omega)=[M_{\alpha\beta}(\omega)]_{\alpha,\beta\in\mathcal{A}}$ be  the matrix given by
\begin{equation}\label{eq:matxM}
M_{\alpha\beta}(\omega):=\sum_{\substack{0\leq i<q^{(n)}_\alpha\\ \beta(\alpha,i)=\beta}}e^{\omega_{(\alpha,i)}}.
\end{equation}
Then $M(0)=M$, and, since $M$ is primitive, the matrix $M(\omega)$ is primitive as well.
Denote by $\rho(\omega)$ the logarithm of Perron-Frobenius eigenvalue of $M(\omega)$, and let us consider the map $\rho:\R\to\R_{>0}$ given by $\rho(t):=\rho(t\omega)$.

For any $M$-invariant vector $\omega$ let $f\in\operatorname{Aff}(T,\omega)$. Let $\mu_\omega$ be a $f$-invariant probability Borel measure on $I$, and let $\nu_\omega$ be a $\phi_\omega$-conformal measure for $T$, that is, $\nu_\omega$ is a probability Borel measure on $I$ such that the measures $(T^{-1})_*\nu_\omega$ and $\nu_\omega$ are equivalent and the Radon-Nikodym derivative is $\frac{d(T^{-1})_*\nu_\omega}{d\nu_\omega}=e^{\phi_\omega}$.
As shown in \cite{BFKT}, both measures exist and are unique. Moreover, they play a key role in understanding the conjugacy $h_\omega:I\to I$ between $f$ and $T$. More precisely, $h_\omega$ is the distribution function of the measure $\mu_\omega$, while $h_\omega^{-1}$ is the distribution function of the measure $\nu_\omega$. Furthermore, in \cite{BFKT} explicit formulas for the Hausdorff dimensions of both measures were obtained.
\begin{theorem}[Theorem~5.2 in \cite{BFKT}]
Let $T$ be an IET of hyperbolic periodic type and let $\omega$ be a right $M$-invariant vector, where $M$ is the self-similarity matrix of $T$. Then, for every $t\in \R$, we have
\begin{equation}\label{eq:dims}
\dim_H(\mu_{t\omega})=\frac{\rho(0)}{\rho(t)-\rho'(0)t} \quad \text{and}\quad
	\dim_H(\nu_{t\omega})=\frac{\rho(t)-\rho'(t)t}{\rho(0)}.
\end{equation}
\end{theorem}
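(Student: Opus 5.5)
The plan is to code both measures by the self-similar tower structure \eqref{eq:towers} and then compute local dimensions along Markov cylinders. Iterating the self-similarity $k$ times gives a nested sequence of partitions $\mathcal P_k$ of $I$ into the level-$kn$ tower intervals $T^iI^{(kn)}_\alpha$. Each element of $\mathcal P_k$ is a cylinder of length $k$ in a subshift of finite type $X$ over the alphabet $\Sigma$ from \eqref{eq:sigma}, with transitions read off from $M$. Every $J\in\mathcal P_k$ has Lebesgue length comparable to $e^{-k\rho(0)}$, up to constants depending only on $T$, because $\lambda$ is the Perron--Frobenius eigenvector of $M$ and the towers have bounded geometry.

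First I will express the two measures on cylinders. By conformality, $\nu_{t\omega}(T^iI^{(kn)}_\alpha)=e^{-tS_i\phi_\omega}\,\nu_{t\omega}(I^{(kn)}_\alpha)$. Summing over the towers shows that the masses of the bases $I^{(kn)}_\alpha$ form, up to normalization, a Perron eigenvector of $M(t\omega)^k$. This uses $M\omega=\omega$, which makes the Birkhoff sums of $\phi_\omega$ over full towers equal to $(M^k\omega)_\alpha=\omega_\alpha$, hence bounded. Consequently
\[
\nu_{t\omega}(J)\asymp \exp\bigl(-k\rho(t)+t\,S_k\Phi_\omega(\xi)\bigr)
\]
for the cylinder $J$ coded by $\xi\in X$, where $\Phi_\omega$ is the locally constant potential with $\Phi_\omega(\xi)=\omega_{\xi_0}$, read as $\omega_{(\alpha,i)}$ on the symbol $\xi_0=(\alpha,i)\in\Sigma$ (the weight in \eqref{eq:matxM}). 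Thus $\nu_{t\omega}$ is, under the coding, the Gibbs measure $m_{t\Phi_\omega}$ with pressure $\rho(t)$. Since $h_{t\omega}^{-1}$ is the distribution function of $\nu_{t\omega}$, the same quantity is the Lebesgue length of the corresponding $f_{t\omega}$-interval $h_{t\omega}^{-1}(J)$. Dually, the $\mu_{t\omega}$-mass of $h_{t\omega}^{-1}(J)$ equals $|J|\asymp e^{-k\rho(0)}$.

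Next I will compute local dimensions along cylinders. For $\nu_{t\omega}$, measured in the Lebesgue metric of the $T$-side, the ratio is
\[
\frac{\log\nu_{t\omega}(J_k(x))}{\log|J_k(x)|}=\frac{k\rho(t)-tS_k\Phi_\omega}{k\rho(0)}+o(1).
\]
At $\nu_{t\omega}$-typical points, Birkhoff's theorem for the ergodic Gibbs measure $m_{t\Phi_\omega}$ gives $S_k\Phi_\omega/k\to\int\Phi_\omega\,dm_{t\Phi_\omega}=\rho'(t)$, the standard derivative-of-pressure formula. This yields $(\rho(t)-t\rho'(t))/\rho(0)$.

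For $\mu_{t\omega}$ the metric lives on the $f$-side, where the cylinders $h_{t\omega}^{-1}(J)$ have length $\asymp e^{-k\rho(t)+tS_k\Phi_\omega}$ and mass $\asymp e^{-k\rho(0)}$. The $\mu_{t\omega}$-typical codings are the Lebesgue-typical ones for $T$, so they follow $m_{\Phi_0}$ and the averages converge to $\rho'(0)$. This yields $\rho(0)/(\rho(t)-t\rho'(0))$. Exactness of these limits gives exact-dimensionality, and Billingsley's lemma / the mass distribution principle then converts local dimensions into $\dim_H$.

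The main obstacle is passing from cylinders to genuine metric balls, especially on the $f$-side, where the cylinder lengths are not uniformly comparable to their neighbours. I will first show that each ball $B(x,r)$ is covered by a bounded number of cylinders of comparable generation adjacent to $J_k(x)$. This should follow from the bounded geometry of the Rauzy--Veech towers (primitivity of $M$ and bounded combinatorics in the periodic case). I will then use that adjacent cylinders of the same generation differ in measure and length by at most a factor $e^{O(|t|\max|\omega|)}$, which is subexponential in $k$. If the bounded-number covering fails near tower boundaries, I would fall back on the weak-Gibbs comparison at scale $k\pm O(1)$, which only perturbs the exponents by $o(k)$.
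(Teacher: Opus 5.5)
The paper gives no proof of this statement; it is quoted from \cite{BFKT}. So the only comparison available is with the ingredients the paper does reproduce: the scaling $\nu_\omega(R^{-1}A)=e^{-\rho(\omega)}\nu_\omega(A)$, Lemma~\ref{lem:rhorho} ($\rho(\omega)=\varrho(\Phi_\omega)$), and $\rho'(t)=\int_X\Phi_\omega\,d\,m_{t\Phi_\omega}$ from \eqref{eq:dervar}. Your cylinder estimates agree with these, and your exponents are the right ones.

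Two small points. First, with the paper's convention, conformality gives $\nu_{t\omega}(T^iI^{(kn)}_\alpha)=e^{+tS_i\phi_\omega}\nu_{t\omega}(I^{(kn)}_\alpha)$; your Gibbs formula already uses the $+$ sign, so only the earlier display needs correcting. Second, summing over towers only yields $v^{(k)}=M(t\omega)^{T}v^{(k+1)}$ for the base masses $v^{(k)}_\alpha=\nu_{t\omega}(I^{(kn)}_\alpha)$. To conclude that they are Perron vectors you need either positivity for all $k$ plus primitivity (cone contraction), or the scaling lemma above.

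\textbf{The gap.} It is in the step you flag yourself: passing from cylinders to balls on the $f$-side.
\begin{itemize}
\item \emph{What your mechanism needs.} Adjacent generation-$k$ cylinders must have comparable $f$-lengths, i.e.\ comparable $\nu_{t\omega}$-masses. You attribute this to bounded geometry of the towers, but primitivity and bounded combinatorics only control Lebesgue lengths on the $T$-side.
\item \emph{Why it can fail.} Two adjacent atoms of $\mathcal{P}_k$ can have codes that differ already in the first symbol, e.g.\ the two atoms touching a discontinuity of $T$. Their masses $\exp(-k\rho(t)+tS_k\Phi_\omega)$ are then evaluated along two unrelated chains: the rightmost-descendant chain on one side, the leftmost on the other. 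The ratio stays bounded in $k$ only if these eventually periodic chains have the same $\Phi_\omega$-average at every boundary point.
\item \emph{Status of that property.} It is an extra structural fact you would have to prove. It is plausible, since both sides of such a point approach the same singularity, but it is not a consequence of primitivity. Without it, both your bounded covering and the $e^{O(|t|)}$ comparison break down near boundaries.
\item \emph{The fallback.} Comparing at scale $k\pm O(1)$ does not help. The problem is geometric (where $x$ sits inside its cylinder), not a failure of the Gibbs estimate.
\end{itemize}

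\textbf{A fix that avoids neighbours.}
\begin{itemize}
\item Every generation-$k$ atom has $\mu_{t\omega}$-mass $\asymp e^{-k\rho(0)}$, and $|h^{-1}_{t\omega}J_{k+1}(x)|\ge c\,|h^{-1}_{t\omega}J_k(x)|$.
\item Hence the points of $h^{-1}_{t\omega}(J)$ within $c^m|h^{-1}_{t\omega}(J)|$ of its endpoints lie in its two extreme generation-$(k+m)$ descendants, which have relative mass $O(e^{-m\rho(0)})$.
\item Take $m\approx\epsilon k$ and apply Borel--Cantelli. Then $\mu_{t\omega}$-a.e.\ $x$ satisfies $B(x,e^{-\epsilon' k}|h^{-1}_{t\omega}J_k(x)|)\subset h^{-1}_{t\omega}J_k(x)$ for all large $k$.
\item This turns your cylinder exponent into the pointwise local dimension $\rho(0)/(\rho(t)-t\rho'(0))$. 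The denominator is $\ge\rho(0)>0$ by convexity of $\rho$.
\end{itemize}

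On the $T$-side nothing of this sort is needed. All atoms of $\mathcal{P}_k$ have length $\asymp e^{-k\rho(0)}$, and Billingsley's covering argument only uses atoms meeting the set being covered.
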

To determine the supremal H\"older exponents of the conjugacy and its inverse, we still need to describe two objects: a graph on $\Sigma$ and a certain edge potential on this graph.
Let $\mathcal{G}=(\Sigma,\mathcal{E})$ be a directed graph with the vertex set $\Sigma$ (defined by \eqref{eq:sigma}) and with edges given by
\[((\beta,j),(\alpha,i))\in\mathcal E \ \Longleftrightarrow\ \beta(\alpha,i)=\beta\ \Longleftrightarrow\ T^{i}I^{(n)}_\alpha\subset I_\beta.\]
For any right $M$-invariant vector $\omega$, let $\vartheta_\omega:\mathcal{E}\to \R_{\geq 0}$ be an edge potential given by
\[\vartheta_\omega((\beta,j),(\alpha,i)):=-\log\frac{\nu_\omega(T^{i}I^{(n)}_\alpha)}{\nu_\omega(I_\beta)}.\]
Next, consider non-negative numbers $\overline{\vartheta}_\omega$ and  $\underline{\vartheta}_\omega$, which are the largest and the smallest values of the averages of the function  $\vartheta_\omega$  when computed along elementary loops in the graph $\mathcal{G}$. A formal definition of both quantities will be given later, in a more general setting, in Section~\ref{section:therm}.
\begin{theorem}[Propositions~14.3 and~15.3 in \cite{BFKT}]
Let $T$ be an IET of hyperbolic periodic type and let $\omega$ be a right $M$-invariant vector, where $M$ is the self-similarity matrix of $T$. Then,
\begin{equation}\label{eq:Hexp}
\mathfrak{H}(h_{\omega})=\frac{\rho(0)}{\overline{\vartheta}_\omega}\in\big(0,\dim_H(\mu_{\omega})\big)  \text{ and }
	\mathfrak{H}(h^{-1}_{\omega})=\frac{\underline{\vartheta}_\omega}{\rho(0)}\in\big(0,\dim_H(\nu_{\omega})\big).
\end{equation}
\end{theorem}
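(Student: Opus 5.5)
The plan is to recast both Hölder exponents as extremal local scaling exponents of the measure $\nu_\omega$ with respect to Lebesgue measure, computed along the nested tower partitions of the self-similar structure. By the remarks preceding the statement, $h^{-1}_\omega$ is the distribution function of $\nu_\omega$. In particular, for an interval $J$ we have $|h_\omega^{-1}(J)|=\nu_\omega(J)$. Since $h_\omega$ is the inverse homeomorphism, $h_\omega$ carries the interval $h_\omega^{-1}(J)$, whose length is $\nu_\omega(J)$, onto $J$, whose length is $|J|$. Hence:
\begin{itemize}
\item $\mathfrak{H}(h^{-1}_\omega)$ is the supremum of those $a$ for which $\nu_\omega(J)\le C|J|^{a}$ holds for all intervals $J$;
\item $\mathfrak{H}(h_\omega)$ is the supremum of those $a$ for which $|J|\le C\nu_\omega(J)^{a}$ holds for all intervals $J$.
\end{itemize}
Since both exponents lie in $(0,1)$ by Theorem~\ref{thm: main2}, only first-order Hölder conditions matter.

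\textbf{Step 1: structure of the partitions.} Iterating the self-similarity $k$ times gives the level-$k$ partition of $I$ into intervals $T^{i}I^{(kn)}_\alpha$. These are coded by paths of length $k$ in $\mathcal G$: a level-$(k+1)$ interval sits inside a level-$k$ interval, and the inclusion is an edge of $\mathcal G$, transported by the rescaling $R$. For Lebesgue measure, $\lambda$ is the Perron--Frobenius eigenvector, so every level-$k$ interval has length comparable to $e^{-k\rho(0)}$, with uniform constants.

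\textbf{Step 2: $\nu_\omega$ is self-similar with the same ratios at every level.} The key claim is that the relative measures satisfy
\[
\frac{\nu_\omega(\text{child})}{\nu_\omega(\text{parent})}=e^{-\vartheta_\omega(e)},
\]
where $e$ is the edge of $\mathcal G$ coding the inclusion, and that this holds at every level, not only at level one. This should follow from uniqueness of the $\phi_\omega$-conformal measure together with $M\omega=\omega$. Indeed, $\phi_\omega$ induced to $I^{(n)}$ and rescaled by $R$ is again $\phi_\omega$, so the normalized restriction of $\nu_\omega$ to $I^{(n)}$, pushed forward by $R$, is again $\nu_\omega$. Conformality then transports $\nu_\omega$ along the towers. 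Consequently, for a level-$k$ cylinder coded by a path $e_1\cdots e_k$,
\[
\nu_\omega = e^{-\sum_j \vartheta_\omega(e_j)+O(1)}.
\]

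\textbf{Step 3: path averages versus cycle averages.} I will use the standard fact that a long path in a finite graph decomposes into elementary cycles plus at most $|\Sigma|$ leftover edges. This gives
\[
k\underline{\vartheta}_\omega-C\le\sum_{j=1}^k\vartheta_\omega(e_j)\le k\overline{\vartheta}_\omega+C .
\]
Both bounds are attained up to $O(1)$ by repeating an extremal elementary cycle.

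\textbf{Step 4: passing to arbitrary intervals and proving sharpness.} Let $J$ be an interval with $e^{-(k+1)\rho(0)}\lesssim|J|\lesssim e^{-k\rho(0)}$.
\begin{itemize}
\item $J$ meets a bounded number of level-$k$ cylinders. Hence $\nu_\omega(J)\lesssim e^{-k\underline\vartheta_\omega}$, which gives $\mathfrak H(h^{-1}_\omega)\ge\underline\vartheta_\omega/\rho(0)$.
\item $J$ contains a level-$(k+c)$ cylinder for a fixed $c$. Hence $\nu_\omega(J)\gtrsim e^{-k\overline\vartheta_\omega}$, which gives $\mathfrak H(h_\omega)\ge\rho(0)/\overline\vartheta_\omega$.
\item For the reverse inequalities, I will test on the cylinders coded by the repeated minimizing, respectively maximizing, cycles.
\end{itemize}

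\textbf{Step 5: strict inequalities with the dimensions.} It remains to show the strict inequalities $\mathfrak H<\dim_H$. Through the mass distribution principle, $\dim_H(\nu_\omega)$ and $1/\dim_H(\mu_\omega)$ are given by $\nu_\omega$-typical averages of $\vartheta_\omega/\rho(0)$. These are Gibbs averages, and they lie strictly between $\underline\vartheta_\omega$ and $\overline\vartheta_\omega$ unless $\vartheta_\omega$ is cohomologous to a constant. For $\omega\neq0$, being cohomologous to a constant would force $\nu_\omega$ to be Lebesgue measure, which is impossible.

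I expect the main obstacle to be Step 2, namely exact scale invariance of the ratios. It needs a careful identification of the induced conformal measure with the rescaled $\nu_\omega$, and control of boundary effects at cylinders adjacent to the endpoints when estimating arbitrary intervals in Step 4.
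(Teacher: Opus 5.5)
\textbf{Steps 1--4.} These are correct, and they are the natural argument for the two formulas. The paper does not prove this statement; it imports it from \cite{BFKT}, so I am judging your proposal on its own merits.

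What you flag as the main obstacle, Step 2, is already available in the paper. Lemma~2.5 of \cite{BFKT}, quoted in the proof of the cohomology lemma, gives $R_*(\nu_\omega|_{R^{-1}I})=e^{-\rho(\omega)}\nu_\omega$. Conformality then carries child/parent ratios up the floors of each tower, because $S_i\phi_\omega$ is constant on $I^{(n)}_\alpha$.

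Step 4 also needs no boundary control. You only use that $J$ meets boundedly many level-$k$ cylinders and contains one level-$(k+c)$ cylinder.

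\textbf{Step 5: the wrong typical measure.} It is false that $1/\dim_H(\mu_\omega)$ is a $\nu_\omega$-typical average of $\vartheta_\omega/\rho(0)$. Since $(h_\omega)_*\mu_\omega$ is Lebesgue measure, the relevant points are Lebesgue-typical for $T$. Their codings are distributed by the measure of maximal entropy $m_{\Phi_0}$, whereas $\nu_\omega$ codes to the Gibbs measure $m_{\Phi_\omega}$. With your identification, $1/\dim_H(\mu_\omega)$ would equal $\dim_H(\nu_\omega)$, forcing both dimensions to be $1$.

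The correct identities follow at once from \eqref{eq:dims} at $t=1$, the cohomology lemma ($\vartheta_\omega$ cohomologous to $\rho(\omega)-\Phi_\omega$) and \eqref{eq:dervar}:
\[
\rho(0)\dim_H(\nu_\omega)=\rho(1)-\rho'(1)=\int_X\vartheta_\omega\,dm_{\Phi_\omega},\qquad \frac{\rho(0)}{\dim_H(\mu_\omega)}=\rho(1)-\rho'(0)=\int_X\vartheta_\omega\,dm_{\Phi_0}.
\]
This also spares you a mass distribution argument for $\mu_\omega$. That argument would be delicate, because the cells $h_\omega^{-1}(C)$ at a fixed level have exponentially different lengths.

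Both $m_{\Phi_\omega}$ and $m_{\Phi_0}$ are fully supported Markov measures. Hence each integral lies strictly inside $(\underline{\vartheta}_\omega,\overline{\vartheta}_\omega)$ unless all elementary cycles have the same average. With the corrected measures, this gives both strict inequalities.

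\textbf{Step 5: the degenerate case.} Your reason for excluding equal cycle averages is not justified. Equal averages $c$ give $\nu_\omega(C)\asymp e^{-kc}$ for every level-$k$ cylinder. There are $\asymp e^{k\rho(0)}$ such cylinders with total mass $1$, which forces $c=\rho(0)$. So you only get a density bounded above and below, not $\nu_\omega=\mathrm{Leb}$.

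That weaker conclusion is enough. By your own Step 4 bounds, both $h_\omega$ and $h_\omega^{-1}$ would then be Lipschitz. This contradicts $\mathfrak{H}(h_\omega),\mathfrak{H}(h^{-1}_\omega)<1$, the central-stable case of Theorems~1.3 and~1.6 of \cite{BFKT}, which you already invoke.
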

The main goal of the paper is to precisely determine the asymptotic behavior of
\[\dim_H(\mu_{t\omega}),\quad \dim_H(\nu_{t\omega}),\quad \mathfrak{H}(h_{t\omega}),\quad \mathfrak{H}(h^{-1}_{t\omega})\]
as $t$ tends to infinity. To this end, we will use methods from the thermodynamic formalism for the shift of finite type determined by the graph $\mathcal G$, together with results from the classical paper \cite{ChGU} on freezing.

\section{Thermodynamical formalism and zero-temperature regime}\label{section:therm}
In this section, we present basic information on the thermodynamic formalism for shifts of finite type and on the behavior of Gibbs measures as the temperature tends to zero, relying mainly on \cite{Ru}, \cite{PU} and \cite{ChGU}.

Let $\Sigma$ be a finite alphabet  and $\mathcal{G}=(\Sigma,\mathcal{E})$ be a connected aperiodic directed graph with the vertex set $\Sigma$ and the edge set $\mathcal{E}$. Denote by $X=X(\mathcal G)\subset \Sigma^\Z$ the shift of finite type (SFT) determined by the graph $\mathcal G$. More precisely, $x=(x_n)_{n\in\Z}\in X$ if and only if $(x_n,x_{n+1})\in\mathcal{E}$ for any $n\in\Z$. Denote by $\sigma:X\to X$ the left shift map on $X$. This map is topologically mixing.

For any potential $\Phi:X\to\R$ and $k\geq 0$, let
\[S_k\Phi(x)=\sum_{0\leq i<k}\Phi(\sigma^ix).\]

We will deal with a specific class of potentials determined by edge potentials $\Phi:\mathcal{E}\to\R$ defined on the edge set.
 Then the associated potential $\Phi:X\to\R$ is given by $\Phi(x):=\Phi(x_0,x_1)$ for $x=(x_n)_{n\in\Z}\in X$. From now on, we will consider only potentials of this type.
For any $k\geq 1$, let $\operatorname{Per}_k(X)$ be the set of periodic sequences in $X$ of period $k$. This set is in one-to-one correspondence with the set of closed paths of length $k$ in the graph $\mathcal{G}$.
The \emph{topological pressure} of the potential $\Phi:X\to\R$ is defined as
\[P(\Phi,X)=\lim_{k\to\infty}\frac{1}{k}\log\sum_{x\in\operatorname{Per}_k(X)} e^{S_k\Phi(x)}.\]
Recall that the zero pressure $P(0,X)$ is equal to the \emph{topological entropy} $h_{top}(X)$ of the shift $\sigma:X\to X$.

Let $\mathcal{M}(\Phi)$ be a non-negative matrix $[\mathcal{M}_{ab}(\Phi)]_{a,b\in\Sigma}$ given by
\begin{equation}\label{eq:mathcalM}
\mathcal{M}_{ab}(\Phi)=\left\{\begin{array}{cl}
e^{\Phi(a,b)}&\text{if }(a,b)\in\mathcal{E}\\
0&\text{otherwise.}\end{array}\right.
\end{equation}
As the graph $\mathcal G$ is connected and aperiodic, the matrix $\mathcal{M}(\Phi)$ is primitive (its $m$th power is positive for some natural number $m$).
Moreover, $\mathcal{M}(0)$ is the adjacency matrix of the graph $\mathcal G$. Denote by $\varrho(\Phi)$ the logarithm of the Perron-Frobenius eigenvalue (spectral radius) of the matrix $\mathcal{M}(\Phi)$. Then
\begin{equation}\label{eq:PFpressure}
\varrho(\Phi)=\lim_{k\to\infty}\log\sqrt[k]{\operatorname{tr}\mathcal{M}^k(\Phi)}=\lim_{k\to\infty}\frac{1}{k}\log\sum_{x\in\operatorname{Per}_k(X)} e^{S_k\Phi(x)}=P(\Phi,X).
\end{equation}

For every $x\in X$ and integer numbers $m\leq n$, let $x[m,n]=x_m\ldots x_n\in\Sigma^{n-m+1}$. For any word $y=y_0\ldots y_{k-1}\in\Sigma^k$, let $[y]\subset X$ be the cylinder given by
\[[y]:=\{x\in X\mid x[0,k-1]=y\}.\]

A probability Borel $\sigma$-invariant measure $m_{\Phi}$ is a Gibbs measure for the potential $\Phi:X\to\R$ if there exists a constant $C>1$ such that
\[C^{-1}\leq \frac{m_{\Phi}\big(\big[x[0,k-1]\big]\big)}{\exp\big(S_k\Phi(x)-kP(\Phi,X)\big)}\leq C\quad\text{for all}\quad x\in X,\ k\in\N.\]
Since the potential is Hölder continuous, the Gibbs measure $m_\Phi$ is unique, and in this specific case, when the potential comes from an edge potential $\Phi:\mathcal{E}\to\R$, it is given as follows:
\begin{equation}\label{eq:Gibbs}
m_\Phi([y_0\ldots y_k])=e^{-k\varrho(\Phi)}\ell_{y_0}(\Phi)\Big(\prod_{0\leq i<k}\mathcal{M}_{y_iy_{i+1}}(\Phi)\Big)r_{y_k}(\Phi),
\end{equation}
for any cylinder $[y]=[y_0\ldots y_k]\subset X$, where $\ell(\Phi)$ is the left and $r(\Phi)$ is the right Perron-Frobenius eigenvector such that $|r(\Phi)|=1$ and $\ell(\Phi)^Tr(\Phi)=1$; both are positive. In the paper, we identify vectors with single-column matrices. The absolute value of a vector is the sum of the absolute values of its coordinates.

For any periodic orbit $x\in \operatorname{Per}_k(X)$, let
\[\Phi(x[0,k-1]):=S_k\Phi(x)=\sum_{0\leq i<k}\Phi(x_i,x_{i+1}),\]
and let
\[\overline{\Phi}:=\sup_{k\in\N}\max_{x\in\operatorname{Per}_k(X)}\frac{\Phi(x[0,k-1])}{k}=\max\Big\{\int_X\Phi\,d\,\lambda\mid \lambda\in\mathcal{P}(X,\sigma)\Big\},\]
where $\mathcal{P}(X,\sigma)$ is the simplex of Borel $\sigma$-invariant measures on $X$. Analogously, we can define
\[\underline{\Phi}:=\inf_{k\in\N}\min_{x\in\operatorname{Per}_k(X)}\frac{\Phi(x[0,k-1])}{k}=\min\Big\{\int_X\Phi\,d\,\lambda\mid \lambda\in\mathcal{P}(X,\sigma)\Big\}.\]
Then
\begin{equation}\label{eq:ouline}
\underline{\Phi}=-\overline{(-\Phi)}.
\end{equation}

A periodic orbit $x=(x_i)_{i\in\Z}\in X$ is called \emph{elementary}, if $x_0,x_1,\ldots,x_{k-1}$ are pairwise distinct, where $k=\operatorname{per}(x)$ is the least period of $x$. Let $\operatorname{Per}_{el}(X)$ be the (finite) set of elementary periodic orbits. Then
\[\overline{\Phi}=\max_{x\in \operatorname{Per}_{el}(X)}\frac{\Phi(x[0,\operatorname{per}(x)-1])}{\operatorname{per}(x)}.\]
An elementary periodic orbit $x$ is \emph{$\Phi$--maximizing}, if $\frac{\Phi(x[0,\operatorname{per}(x)-1])}{\operatorname{per}(x)}=\overline{\Phi}$.

Let us consider the $\Phi$--maximizing subshift $\overline{X}=\overline{X}(\Phi)\subset X$ consisting of $x\in X$ such that for every $n\in\N$ the word $x_n \, x_{n+1}$ occurs in an elementary $\Phi$--maximizing periodic orbit. Then $\overline{X}$ is a SFT determined by the graph $\overline{\mathcal G}=(\overline{\Sigma},\overline{\mathcal E})$, where $\overline{\Sigma}\subset\Sigma$ is the subset of the alphabet symbols which appear in elementary $\Phi$--maximizing periodic orbits, and $(a,b)\in\overline{\mathcal E}$ if the word $a \,b$ occurs in an elementary $\Phi$--maximizing periodic orbit.

In an analogous way, we can also define the $\Phi$-minimizing subshift $\underline{X}(\Phi)\subset X$ using $\underline{\Phi}$ instead of $\overline{\Phi}$.
The subshift  $\underline{X}(\Phi)$ is likewise a shift of finite type determined by a graph, which we will denote by $\underline{\mathcal G}=(\underline{\Sigma},\underline{\mathcal E})$
On the other hand,  the $\Phi$-minimizing subshift  can also be obtained as $\underline{X}(\Phi)=\overline{X}(-\Phi)$.

After \cite{ChGU}, let us consider two edge potentials $\Phi,\Psi :\mathcal{E}\to\R$ and the corresponding potentials $\Phi,\Psi :X\to\R$. For every $t\geq 0$, we deal with the potential $\Psi+t\Phi$ and the corresponding Gibbs measure $m_{\Psi+t\Phi}$. The main objective of the authors of article \cite{ChGU} was to investigate the asymptotic behavior of Gibbs measures as $t\to+\infty$, which from the thermodynamic perspective corresponds to the search for equilibrium states at zero-temperature. The following two results summarize the main results of \cite{ChGU} that we will use. In what follows, we will use the standard big $O$ notation.

\begin{theorem}[Theorem~1.1 in \cite{ChGU}]
Let $\mathcal{G}=(\Sigma,\mathcal{E})$ be a connected directed finite graph and let $X\subset \Sigma^\Z$ be the associated SFT. For any pair of edge potentials $\Phi,\Psi:\mathcal{E}\to\R$ the Gibbs measures $m_{\Psi+t\Phi}$, $t\geq 0$, converges weakly to a shift invariant measure $\nu$ on $X$ as $t\to+\infty$. Moreover, there exists $\eta>0$ such that for any cylinder $[y]\subset X$, we have
\begin{equation}\label{eq:Gibbsconv}
m_{\Psi+t\Phi}([y])=\nu([y])+O(e^{-\eta t}).
\end{equation}
\end{theorem}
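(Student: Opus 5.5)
The plan is to work directly with the explicit formula \eqref{eq:Gibbs}, which writes $m_{\Psi+t\Phi}([y_0\ldots y_k])$ in terms of the entries of $\mathcal M(\Psi+t\Phi)$, its Perron--Frobenius eigenvalue $e^{\varrho(\Psi+t\Phi)}$, and the normalized eigenvectors $\ell(\Psi+t\Phi)$, $r(\Psi+t\Phi)$. Since a cylinder has finitely many symbols, it suffices to prove two things for the rescaled quantities. First, $e^{-\varrho(\Psi+t\Phi)}\mathcal M_{ab}(\Psi+t\Phi)$ converges at an exponential rate. Second, the eigenvectors $\ell(\Psi+t\Phi)$ and $r(\Psi+t\Phi)$ converge at an exponential rate. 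The limit $\nu$ is then defined by the limiting cylinder values. It is $\sigma$-invariant and a probability measure because these properties pass to the limit cylinder by cylinder.

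\textbf{Step 1: normalize the potential.} For a finite graph there is a function $u:\Sigma\to\R$ such that
\[
\widetilde\Phi(a,b):=\Phi(a,b)-\overline{\Phi}+u(a)-u(b)\le 0
\]
for all edges. This is the finite version of the Mañé/Bousch lemma; one can take $u$ to be a max-plus eigenvector, i.e.\ a max-weight path potential. Equality then holds on every edge of a $\Phi$-maximizing elementary cycle. Replacing $\Phi$ by $\widetilde\Phi$ changes $\mathcal M(\Psi+t\Phi)$ only by the diagonal conjugation $D_t=\operatorname{diag}(e^{tu})$ and by the scalar $e^{t\overline\Phi}$. Both leave the Gibbs measure \eqref{eq:Gibbs} unchanged. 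So I may assume $\Phi\le 0$ with $\overline\Phi=0$. Then
\[
\mathcal M(\Psi+t\Phi)=A+\sum_i e^{-c_i t}B_i,
\]
where $A$ collects the entries $e^{\Psi(a,b)}$ on the edges where $\Phi=0$, there are finitely many $c_i>0$, and the $B_i\ge 0$.

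\textbf{Step 2: perturbation in the small parameters.} Introduce the variables $s_i=e^{-c_i t}$. The Perron root $\lambda(t)$ of $A+\sum s_iB_i$ is a root of the characteristic polynomial, whose coefficients are polynomials in the $s_i$. Its eigenvector can be chosen with entries given by minors of $\lambda I-\mathcal M$, hence also algebraic in $(s_i)$ and $\lambda$. The goal is to show that $\lambda(t)$, and the normalized $\ell(t)$ and $r(t)$, have expansions of the form
\[
\text{(limit)}+\sum_j a_j e^{-\gamma_j t},\qquad \gamma_j>0,\ \gamma_j\ \text{increasing},
\]
convergent for large $t$. These are generalized Puiseux, or Dirichlet-type, series. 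The first choice is to reduce to a one-parameter Newton--Puiseux argument. If the $c_i$ are rationally dependent, write $s_i=\varepsilon^{k_i}$ with $\varepsilon=e^{-ct}$, and the classical Puiseux theorem in $\varepsilon^{1/N}$ gives the expansion with $\eta=c/N$. In general, the Newton-polygon machinery works verbatim for series with real exponents in a finitely generated ordered group. Alternatively, one can perturb the edge values of $\Phi$ slightly to rational ratios; this does not change the set of edges where $\Phi=0$, and the resulting estimates are monotone by comparison of positive matrices.

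\textbf{Main obstacle.} The difficult case is when $A$ is reducible with several maximal irreducible blocks of the same spectral radius. These are the different components of the maximizing subshift, possibly joined by zero-$\Phi$ connecting edges. In that case the Perron root of $A$ is not simple, and naive analytic perturbation fails: the limiting eigenvector is not determined by $A$ alone, but is selected by the lowest-order terms of the $B_i$ linking the blocks. This is the mechanism in \cite{ChGU}. I would handle it with the Newton polygon of the characteristic polynomial, applied to the branch of eigenvalues near $e^{\varrho(A)}$. The real branch that is largest for all large $t$, which is the Perron root by positivity, still has a fractional-exponent expansion. Its eigenvector, normalized with $|r|=1$ and $\ell^Tr=1$, is a ratio of such expansions whose leading coefficients are nonzero by positivity. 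This yields convergence with an error $O(e^{-\eta t})$ for some $\eta>0$ uniform over the finitely many entries. Plugging these expansions into \eqref{eq:Gibbs} gives \eqref{eq:Gibbsconv} cylinder by cylinder.
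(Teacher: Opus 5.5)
The paper gives no proof of this statement. It quotes Theorem~1.1 of \cite{ChGU}, whose proof combines periodic-orbit approximation, Birkhoff's Hilbert-metric form of the Perron--Frobenius theorem and a renormalization of the graph; it also produces a recursive algorithm for $\nu$. Your Puiseux route is a legitimate alternative in principle, since it gives the rate without identifying $\nu$. As written, however, it has two genuine gaps.

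\textbf{The eigenvectors need not converge.} Your reduction requires exponential convergence of $\ell(\Psi+t\Phi)$ and $r(\Psi+t\Phi)$ normalized by $|r|=1$, $\ell^Tr=1$, and this is false. Take vertices $a,b$ with all four edges, $\Psi=0$, $\Phi(b,a)=-1$ and $\Phi=0$ on the other edges. Then $\Phi\le 0=\overline\Phi$ already, and the max-weight-path choice of $u$ from either base point gives the same picture. With $\varepsilon=e^{-t}$,
\[
\mathcal M(t\Phi)=\begin{pmatrix}1&1\\ \varepsilon&1\end{pmatrix},\quad e^{\varrho(t\Phi)}=1+\sqrt{\varepsilon},\quad r=\frac{(1,\sqrt{\varepsilon})}{1+\sqrt{\varepsilon}},\quad \ell=\frac{1+\sqrt{\varepsilon}}{2}\Big(1,\frac{1}{\sqrt{\varepsilon}}\Big).
\]
So $\ell_b\sim\frac12 e^{t/2}\to\infty$, while $m_{t\Phi}([a])=m_{t\Phi}([b])=\frac12$ and $m_{t\Phi}([a\,b])=O(e^{-t/2})$. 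This is exactly your ``main obstacle'' configuration (two maximizing components joined by a zero-cost edge), not a degenerate case.

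The fix is to expand the cylinder masses rather than the vectors. For the simple root $\lambda=e^{\varrho}$ of $p(x)=\det(xI-\mathcal M)$ one has $\operatorname{adj}(\lambda I-\mathcal M)=p'(\lambda)\,r\ell^T$ when $\ell^Tr=1$, so \eqref{eq:Gibbs} reads
\[
m_{\Psi+t\Phi}([y_0\ldots y_k])=\lambda^{-k}\Big(\prod_{0\le i<k}\mathcal M_{y_iy_{i+1}}\Big)\frac{\operatorname{adj}(\lambda I-\mathcal M)_{y_ky_0}}{p'(\lambda)}.
\]
This is a quotient of expansions of your type with nonvanishing denominator. Since it lies in $[0,1]$, its expansion has no negative exponents, which gives the limit and the rate at once.

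Suppose all costs $c_i$ are commensurable; that, not mere rational dependence, is what $s_i=\varepsilon^{k_i}$ needs. Then this finishes the proof using only the classical Puiseux theorem. The Perron root is a simple root for each finite $t$, so it stays on one branch. The exponents lie in $\frac1N\Z$ for a fixed $N$, so $\eta=c/N$ works for all cylinders. The block structure of $A$ plays no role at all.

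\textbf{Incommensurable costs.} Here nothing is actually proved, and your fallback fails for three reasons. First, entrywise comparison $\mathcal M'\le\mathcal M\le\mathcal M''$ controls only Perron roots, not normalized eigenvectors or Gibbs measures. Second, a cost change $\delta$ changes entries by factors $e^{\pm\delta t}$, which are unbounded in $t$. Third, $\nu$ itself is not stable under such perturbations.

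For instance, put zero-cost loops at $a,b,c$ and edges $a\to b$, $b\to a$, $b\to c$, $c\to b$ of costs $x,y,z,w>0$. Then
\[
m_{t\Phi}([a])=\frac{e^{-(x+y)t}}{2\big(e^{-(x+y)t}+e^{-(z+w)t}\big)},
\]
whose limit is $\frac12$, $\frac14$ or $0$ according as $x+y$ is smaller than, equal to, or larger than $z+w$. A tie such as $x=w=1$, $y=z=\sqrt2$ is destroyed by a generic rational perturbation.

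So the general case rests entirely on the assertion that convergent Newton--Puiseux theory works ``verbatim'' with exponents in a finitely generated subgroup of $\R$. The formal Newton-polygon steps do carry over, but convergence does not come for free. There is no single variable $\varepsilon^{1/N}$ for the analytic implicit function theorem, and the natural variables change along the way. Already $\sqrt{e^{-c_1t}+e^{-c_2t}}$ with $c_1<c_2$ is a series in $e^{-c_1t/2}$ and $e^{-(c_2-c_1)t}$, not in powers of $e^{-c_1t}$ and $e^{-c_2t}$.

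You need a genuine argument here. One option: at each step, write the finitely many exponent vectors in play in a $\mathbb{Q}$-basis of the span of the costs. Put them into a rational simplicial cone on which the decay rate is positive, and pass to the corresponding monomial variables $u_1,\dots,u_p\to 0$. Then separate the Perron branch and conclude with the several-variable implicit function theorem.

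As it stands, your proof covers only commensurable $\Phi$, such as the integer vector $\omega$ of the last section. It does not cover a general $M$-invariant $\omega$ when $\kappa\ge 3$.
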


\begin{remark}
If additionally $\Psi=0$, then, applying results of \cite{CG} (for more details on ergodic optimization we refer to the survey \cite{Je}), $\nu$ is a $\Phi$--maximizing measure, that is
\begin{equation}\label{eq:max}
\int_X\Phi\,d\,\nu=\max\Big\{\int_X\Phi\,d\,\lambda\mid \lambda\in\mathcal{P}(X,\sigma)\Big\}=\overline{\Phi}.
\end{equation}
\end{remark}

We would like to emphasize that article \cite{ChGU} also provides an effective algorithm for determining the limiting measure; however, for our purposes, the explicit form of the measure is not necessary. However, we will need more information about the asymptotic behavior of the Perron–Frobenius eigenvalues $\exp(\varrho(\Psi+t\Phi))$ of the matrices $\mathcal{M}(\Psi+t\Phi)$ as $t$  tends to infinity.

\begin{proposition}[Proposition~3 in \cite{ChGU}]
If additionally we have $\overline{\Phi}=0$ and $P(\Psi,\overline{X}(\Phi))=0$, then there exists $\eta>0$ such that
\[
\exp(\varrho(\Psi+t\Phi))=1+O(e^{-\eta t}).
\]
\end{proposition}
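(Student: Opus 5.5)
The strategy is to reduce, by a diagonal conjugation, to a small perturbation of a fixed nonnegative matrix whose spectral radius is governed by the maximizing subgraph $\overline{\mathcal G}$ alone.

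Write $A_t:=\mathcal{M}(\Psi+t\Phi)$, so that $e^{\varrho(\Psi+t\Phi)}$ is the spectral radius of $A_t$.

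\emph{Step 1: normalize $\Phi$.} Since $\overline{\Phi}=0$, no closed path in $\mathcal G$ has positive $\Phi$-sum. Fix a vertex $a_0$ and define $u(b)$ as the supremum of $\Phi$-sums over paths from $a_0$ to $b$. This is finite because there are no positive cycles, and the graph is connected. Then
\[
\widetilde\Phi(a,b):=\Phi(a,b)+u(a)-u(b)\le 0\quad\text{for all }(a,b)\in\mathcal E.
\]
Along any elementary $\Phi$-maximizing cycle the $\widetilde\Phi$-sum equals the $\Phi$-sum, namely $0$. Since all terms are $\le 0$, each one vanishes, so $\widetilde\Phi\equiv0$ on $\overline{\mathcal E}$. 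Let $\mathcal E_0:=\{e:\widetilde\Phi(e)=0\}\supseteq\overline{\mathcal E}$, and set
\[
c:=\min\{-\widetilde\Phi(e): e\notin\mathcal E_0\}>0,
\]
which is positive by finiteness of $\mathcal E$. Any cycle inside $\mathcal E_0$ has zero $\Phi$-average, so it is maximizing. It decomposes into elementary maximizing cycles, hence uses only edges of $\overline{\mathcal E}$. Consequently the strongly connected components of the graph $(\Sigma,\mathcal E_0)$ that carry cycles are exactly the irreducible components of $\overline{\mathcal G}$. The remaining edges of $\mathcal E_0$ are transient.

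\emph{Step 2: conjugate and take the limit.} Let $D_t=\operatorname{diag}(e^{tu(a)})$. The matrix $D_tA_tD_t^{-1}$ has the same spectrum as $A_t$ and entries $e^{\Psi(a,b)+t\widetilde\Phi(a,b)}$. Hence $D_tA_tD_t^{-1}=B+E_t$, where $B$ has entries $e^{\Psi(a,b)}$ on $\mathcal E_0$ and $0$ elsewhere, and $\|E_t\|=O(e^{-ct})$. Put $B$ in block-triangular form along its strongly connected components. Transient parts contribute only zero eigenvalues, so the spectral radius of $B$ is the maximum over components of $\overline{\mathcal G}$ of the spectral radii of the restricted matrices $\mathcal M(\Psi)$. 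By \eqref{eq:PFpressure} applied componentwise, this maximum equals $\exp P(\Psi,\overline X(\Phi))=1$; the pressure of a reducible SFT is the maximum over its components.

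\emph{Step 3: perturbation.} I expect this to be the main obstacle. Eigenvalues are not Lipschitz under perturbation when $B$ has nontrivial Jordan blocks, which can happen if several components of pressure $0$ are linked by transient edges. I would invoke the Ostrowski--Elsner bound: every eigenvalue of $B+E_t$ lies within $C\|E_t\|^{1/|\Sigma|}$ of some eigenvalue of $B$, and conversely. This gives $|\operatorname{spr}(A_t)-1|=O(e^{-ct/|\Sigma|})$. Since both matrices are nonnegative, their spectral radii are eigenvalues, and the Perron root of $B$ is matched by the Perron root of $B+E_t$. Alternatively, monotonicity gives $\operatorname{spr}(B+E_t)\ge\operatorname{spr}(B)=1$, and the Elsner bound is needed only for the upper estimate. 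Either way the statement follows with $\eta=c/|\Sigma|$.
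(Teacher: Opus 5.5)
Your argument is correct. The paper does not prove this statement. It is quoted from \cite{ChGU}, where it comes out of the authors' renormalization analysis of the parameter-dependent matrices $\mathcal{M}(\Psi+t\Phi)$, a machinery built mainly to identify the zero-temperature limit of the Gibbs measures.

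Your route is more elementary and self-contained. The max-plus sub-action $u$ replaces $\Phi$ by a cohomologous $\widetilde\Phi\le 0$ whose zero set $\mathcal{E}_0$ has exactly $\overline{\mathcal{G}}$ as its cyclic part. Your check of this is right: an edge of $\mathcal{E}_0$ inside a strongly connected component of $(\Sigma,\mathcal{E}_0)$ lies on a cycle of $\mathcal{E}_0$, hence on an elementary maximizing loop. The diagonal similarity reduces everything to $B+E_t$ with $\|E_t\|=O(e^{-ct})$ and $\operatorname{spr}(B)=e^{P(\Psi,\overline{X}(\Phi))}=1$ by the Frobenius normal form. Monotonicity plus Elsner's bound then conclude.

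This buys an explicit rate $\eta=c/|\Sigma|$. Applied to $\Phi-\overline{\Phi}$, and without assuming $P(\Psi,\overline{X}(\Phi))=0$, the same argument directly gives $\varrho(\Psi+t\Phi)=P(\Psi,\overline{X}(\Phi))+\overline{\Phi}\,t+O(e^{-\eta t})$. What it does not give is the convergence of the Gibbs measures themselves, which the paper also takes from \cite{ChGU}.

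Your concern about Jordan blocks is well founded, and the loss in the exponent is real. Take two vertices $a,b$ with zero-weight self-loops, $\Phi(a,b)=0$, $\Phi(b,a)=-1$ and $\Psi=0$. Then $c=1$ and the eigenvalues are $1\pm e^{-t/2}$. So $\eta=c/|\Sigma|=1/2$ is sharp there, and one cannot take $\eta=c$ in general.

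Two points to tidy up.

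First, \eqref{eq:PFpressure} is stated for primitive matrices, but the components of $\overline{\mathcal{G}}$ may be periodic (for instance a single maximizing loop of length $p>1$). In that case $\frac{1}{k}\log\operatorname{tr}$ does not converge. Instead, use that the pressure of an irreducible SFT with an edge potential is the logarithm of the spectral radius of its weighted adjacency matrix. Also note that $\overline{\mathcal{G}}$ is the disjoint union of its irreducible components, because each of its edges lies on a cycle in it.

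Second, Elsner's theorem does not by itself pair Perron root with Perron root. It says the Perron root $r_t$ of $B+E_t$ lies within $O(e^{-ct/|\Sigma|})$ of some eigenvalue of $B$. All eigenvalues of $B$ have modulus at most $1$, so $r_t\le 1+O(e^{-ct/|\Sigma|})$. Combined with $r_t\ge 1$ from $B+E_t\ge B$, this is exactly your second version, which is the one to keep.
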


In fact, we will need the following version of this result.

\begin{proposition}\label{prop:rhozero}
For any pair of edge potentials $\Phi,\Psi:\mathcal{E}\to\R$ there exists $\eta>0$ such that
\begin{equation}
\varrho(\Psi+t\Phi)=P(\Psi,\overline{X}(\Phi))+\overline{\Phi}\,t+O(e^{-\eta t}).
\end{equation}
\end{proposition}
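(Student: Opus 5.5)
We reduce Proposition~\ref{prop:rhozero} to Proposition~3 in \cite{ChGU} by subtracting constants from the two potentials. Set $\Phi_0:=\Phi-\overline{\Phi}$ and $\Psi_0:=\Psi-P(\Psi,\overline{X}(\Phi))$, both edge potentials on $\mathcal E$.

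\textbf{Step 1: the hypotheses of the cited proposition hold for $(\Phi_0,\Psi_0)$.} Subtracting a constant $c$ shifts every average along a closed path by $c$. Hence $\overline{\Phi_0}=\overline{\Phi}-\overline{\Phi}=0$. The maximizing elementary periodic orbits of $\Phi_0$ and $\Phi$ are the same, so $\overline{X}(\Phi_0)=\overline{X}(\Phi)$.

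Next, pressure on a subshift shifts by the constant. From the periodic-orbit formula,
\[
P(\Psi-c,\overline{X})=\lim_{k}\frac1k\log\sum_{x\in\operatorname{Per}_k(\overline X)}e^{S_k\Psi(x)-kc}=P(\Psi,\overline X)-c .
\]
This needs $\overline{X}$ to be a subshift on which pressure is defined by the same formula, with some care if $\overline{\mathcal G}$ is not aperiodic or connected. We use the general definition, or the spectral radius of the restricted matrix; the constant shift holds for either. Thus $P(\Psi_0,\overline X(\Phi_0))=0$.

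\textbf{Step 2: apply the cited proposition and rescale.} It gives $\eta>0$ with $\exp(\varrho(\Psi_0+t\Phi_0))=1+O(e^{-\eta t})$. Taking logarithms, $\varrho(\Psi_0+t\Phi_0)=O(e^{-\eta t})$, since $\log(1+u)=O(u)$ for small $u$ and the bound is trivial for bounded $t$.

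Now $\Psi_0+t\Phi_0=\Psi+t\Phi-\big(P(\Psi,\overline X(\Phi))+t\overline\Phi\big)$. Subtracting a constant $c$ from an edge potential multiplies $\mathcal M(\cdot)$ by $e^{-c}$, so $\varrho$ drops by exactly $c$. Therefore
\[
\varrho(\Psi+t\Phi)=P(\Psi,\overline X(\Phi))+\overline\Phi\, t+O(e^{-\eta t}).
\]

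\textbf{Main obstacle.} The only delicate point is that the cited proposition in \cite{ChGU} is stated under the standing assumptions there, such as connectedness of $\mathcal G$, and with their definition of pressure on $\overline{X}(\Phi)$, which may be reducible. I would check that their definition matches ours: the logarithm of the maximal spectral radius over irreducible components of $\mathcal M(\Psi)$ restricted to $\overline{\mathcal E}$. I would also check that the constant-shift identity holds for that definition, which it clearly does, since every component's spectral radius scales by $e^{-c}$. Beyond this, the argument is a normalization.
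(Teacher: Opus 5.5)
Your proposal is correct and follows the paper's proof: you set $\Phi_0=\Phi-\overline{\Phi}$ and $\Psi_0=\Psi-P(\Psi,\overline{X}(\Phi))$, check $\overline{\Phi}_0=0$, $\overline{X}(\Phi_0)=\overline{X}(\Phi)$ and $P(\Psi_0,\overline{X}(\Phi_0))=0$, apply Proposition~3 of \cite{ChGU}, and undo the normalization via $\mathcal{M}(\Psi+t\Phi)=e^{P(\Psi,\overline{X}(\Phi))+\overline{\Phi}t}\mathcal{M}(\Psi_0+t\Phi_0)$ before taking logarithms. Your remark on how pressure is defined on a possibly reducible $\overline{X}(\Phi)$ is a reasonable caveat that the paper leaves implicit, but it does not change the argument.
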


\begin{proof}
Let us consider the corrected edge potentials $\Phi_0:=\Phi-\overline{\Phi}$ and $\Psi_0:=\Psi-P(\Psi,\overline{X}(\Phi))$. By definition,
$\overline{\Phi}_0=0$ and $\overline{X}(\Phi_0)=\overline{X}(\Phi)$. Therefore,
\[P(\Psi_0,\overline{X}(\Phi_0))=P(\Psi,\overline{X}(\Phi_0))-P(\Psi,\overline{X}(\Phi))=0.\]
In view of Proposition~\ref{prop:rhozero}, we have $e^{\varrho(\Psi_0+t\Phi_0)}=1+O(e^{-\eta t})$. On the other hand,
$\mathcal{M}(\Psi+t\Phi)=e^{P(\Psi,\overline{X}(\Phi))+\overline{\Phi}\, t}\mathcal{M}(\Psi_0+t\Phi_0)$. It follows that
\[e^{\varrho(\Psi+t\Phi)}=e^{P(\Psi,\overline{X}(\Phi))+\overline{\Phi}\, t}e^{\varrho(\Psi_0+t\Phi_0)}=e^{P(\Psi,\overline{X}(\Phi))+\overline{\Phi}\, t}(1+O(e^{-\eta t})).\]
This gives,
\[\varrho(\Psi+t\Phi)=P(\Psi,\overline{X}(\Phi))+\overline{\Phi}\, t+\log(1+O(e^{-\eta t}))=P(\Psi,\overline{X}(\Phi))+\overline{\Phi}\, t+O(e^{-\eta t}).\]
\end{proof}

%To conclude this section, we focus on edge potentials $\Phi$ that depend only on the terminal vertex, that is, $\Phi(a,b)=\Phi(b)$ for a map $\Phi:\Sigma\to\R$.
The following result gives an explicit formula for the limit behavior of the derivative of $\varrho(t\Phi)$.

\begin{proposition}\label{prop:rhoder}
For any edge potential $\Phi:\mathcal{E}\to\R$ there exists $\eta>0$ such that
\[\frac{d}{dt}\varrho(t\Phi)=\int_X\Phi\,d\,\nu+O(e^{-\eta t})=\overline\Phi+O(e^{-\eta t}).\]
 \end{proposition}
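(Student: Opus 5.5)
The plan is to use the standard thermodynamic identity for the derivative of pressure. This identity expresses $\frac{d}{dt}\varrho(t\Phi)$ as the integral of $\Phi$ against the Gibbs measure $m_{t\Phi}$. We then pass to the limit using Theorem~1.1 of \cite{ChGU} together with the Remark identifying the limit measure as $\Phi$-maximizing.

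\textbf{Step 1: the derivative formula.} Since $\mathcal{M}(t\Phi)$ is primitive, its Perron--Frobenius eigenvalue $e^{\varrho(t\Phi)}$ is simple. Hence it is real-analytic in $t$, and first-order perturbation theory gives
\[
\frac{d}{dt}e^{\varrho(t\Phi)}=\frac{\ell(t\Phi)^T\,\frac{d}{dt}\mathcal{M}(t\Phi)\,r(t\Phi)}{\ell(t\Phi)^Tr(t\Phi)},
\qquad \frac{d}{dt}\mathcal{M}_{ab}(t\Phi)=\Phi(a,b)\,\mathcal{M}_{ab}(t\Phi).
\]
We use the normalization $\ell^Tr=1$ and divide by $e^{\varrho(t\Phi)}$. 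Comparing with \eqref{eq:Gibbs} for $k=1$, this becomes
\[
\frac{d}{dt}\varrho(t\Phi)=\sum_{(a,b)\in\mathcal E}\Phi(a,b)\,e^{-\varrho(t\Phi)}\ell_a(t\Phi)\mathcal{M}_{ab}(t\Phi)r_b(t\Phi)=\sum_{(a,b)\in\mathcal E}\Phi(a,b)\,m_{t\Phi}([ab])=\int_X\Phi\,d\,m_{t\Phi},
\]
because $\Phi$ is constant on each two-letter cylinder $[ab]$.

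\textbf{Step 2: the zero-temperature limit.} Apply Theorem~1.1 of \cite{ChGU} with $\Psi=0$. It gives a limit measure $\nu$ and some $\eta>0$ such that $m_{t\Phi}([ab])=\nu([ab])+O(e^{-\eta t})$ for every cylinder. The edge set is finite, so summing over it yields
\[
\int_X\Phi\,d\,m_{t\Phi}=\int_X\Phi\,d\,\nu+O(e^{-\eta t}).
\]
The implied constant is at most $\max|\Phi|\cdot\#\mathcal E$ times the constants coming from \eqref{eq:Gibbsconv}.

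\textbf{Step 3: identifying the limit.} By the Remark, in the form of \eqref{eq:max}, the measure $\nu$ is $\Phi$-maximizing, so $\int_X\Phi\,d\,\nu=\overline\Phi$. This gives both stated equalities.

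\textbf{Cross-check and main obstacle.} There is an independent consistency check via Proposition~\ref{prop:rhozero} with $\Psi=0$: it gives $\varrho(t\Phi)=h_{top}(\overline X(\Phi))+\overline\Phi t+O(e^{-\eta t})$. That expansion alone does not control the derivative. However, convexity of $t\mapsto\varrho(t\Phi)$ combined with Step 1 confirms that the derivative increases to $\overline\Phi$.

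The only genuinely delicate point is Step 1. One must justify differentiability of the eigenvalue, which follows from simplicity of the Perron root. One must also match the eigenvector normalization in \eqref{eq:Gibbs} with the one in the perturbation formula. Both are routine once the normalization $\ell^Tr=1$ is used consistently.
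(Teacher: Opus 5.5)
Your proposal is correct and follows essentially the same route as the paper. Differentiating the Perron--Frobenius eigenvalue with the normalization $\ell^Tr=1$ gives $\ell^T\frac{d}{dt}\mathcal{M}(t\Phi)\,r$; identifying $e^{-\varrho(t\Phi)}\ell_a\mathcal{M}_{ab}r_b=m_{t\Phi}([a\,b])$ then yields $\frac{d}{dt}\varrho(t\Phi)=\int_X\Phi\,d\,m_{t\Phi}$, and \eqref{eq:Gibbsconv} and \eqref{eq:max} finish the proof. Your convexity cross-check is a harmless extra that the paper's proof does not need.
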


 \begin{proof}
 By definition, $e^{\varrho(t\Phi)}=\ell^T(t\Phi)\mathcal{M}(t\Phi)r(t\Phi)$. It follows that
 \begin{align*}
 e^{\varrho(t\Phi)}\tfrac{d}{dt}\varrho(t\Phi)&=\frac{d}{dt}e^{\varrho(t\Phi)}=\ell^T(t\Phi)\frac{d}{dt}\mathcal{M}(t\Phi)r(t\Phi)\\
 &=
 \sum_{a,b\in\Sigma}\ell_a(t\Phi)\frac{d}{dt}\mathcal{M}_{ab}(t\Phi)r_b(t\Phi)\\
 &= \sum_{(a,b)\in\mathcal{E}}\ell_a(t\Phi)\frac{d}{dt}\mathcal{M}_{ab}(t\Phi)r_b(t\Phi).
 \end{align*}
 Note that the second equality follows easily by using the normalization $\ell(t\Phi)^Tr(t\Phi)=1$.
 By the definition of $\mathcal{M}$, for any pair $(a,b)\in \mathcal{E}$, we have
 \[\tfrac{d}{dt}\mathcal{M}_{ab}(t\Phi)=\mathcal{M}_{ab}(t\Phi)\Phi(a,b).\]
  In view of \eqref{eq:Gibbs}, we have $e^{-\varrho(t\Phi)}\ell_a(t\Phi)\mathcal{M}_{ab}(t\Phi)r_b(t\Phi)=m_{t\Phi}([a\,b])$ for any $[a\,b]\subset X$.
Therefore,
 \begin{align}\label{eq:dervar}
 \begin{aligned}
 \tfrac{d}{dt}\varrho(t\Phi)&=e^{-\varrho(t\Phi)}\sum_{(a,b)\in\mathcal{E}}\Phi(a,b)\ell_a(t\Phi)\mathcal{M}_{ab}(t\Phi)\,r_b(t\Phi)\\
& =\sum_{(a,b)\in\mathcal{E}}\Phi(a,b)m_{t\Phi}([a\,b])=\int_X\Phi\,d\,m_{t\Phi}.
\end{aligned}
\end{align}
 By \eqref{eq:Gibbsconv} and \eqref{eq:max}, this gives
 \[\tfrac{d}{dt}\varrho(t\Phi)=\sum_{(a,b)\in\mathcal{E}}\Phi(a,b)\nu([a\,b])+O(e^{-\eta t})=\int_X\Phi\,d\,\nu+O(e^{-\eta t})=\overline{\Phi}+O(e^{-\eta t}).\]
 \end{proof}

\section{Asymptotic behavior}
Let $T:I\to I$ be an IET of hyperbolic periodic type identified with a pair $(\pi,\lambda)$.
Let $\mathcal G=(\Sigma,\mathcal{E})$ be the corresponding directed graph defined in Section~\ref{sec:selfsim}, i.e.
\[\Sigma=\{(\alpha,i)\mid \alpha \in\mathcal A,\,0\leq i<q^{(n)}_{\alpha}\}\quad\text{and}\quad((\beta,j),(\alpha,i))\in\mathcal E  \Leftrightarrow \beta(\alpha,i)=\beta.\]
Denote by $X=X(\mathcal G)\subset \Sigma^\Z$ the shift of finite type (SFT) determined by the graph $\mathcal G$, and let $\sigma:X\to X$ be the left shift map on $X$.

For every right $M$-invariant vector $\omega\in\R^{\mathcal A}$ ($M$ is the self-similarity matrix of $T$), let $\Phi_\omega:\mathcal E\to\R$ be an edge potential given by
\[\Phi_\omega((\beta,j),(\alpha,i))=\log\frac{\nu_\omega(T^{i}I^{(n)}_\alpha)}{\nu_\omega(I^{(n)}_\alpha)}.\]
As $\nu_\omega$ is the unique $\phi_\omega$-conformal measure, we have
\[\Phi_\omega((\beta,j),(\alpha,i))=\log e^{S_i\phi_\omega|_{I^{(n)}_\alpha}}=\omega_{(\alpha,i)}.\]
It follows that
\begin{equation}\label{eq:phito}
\Phi_{t\omega}=t\Phi_\omega\quad\text{for any real}\ t.
\end{equation}

Recall that the vector $\omega$ also determines another edge potential $\vartheta_{\omega}:\mathcal{E}\to\R_{\geq 0}$ given by
\[\vartheta_\omega((\beta,j),(\alpha,i))=-\log\frac{\nu_\omega(T^{i}I^{(n)}_\alpha)}{\nu_\omega(I_\beta)}.\]
As in Section~\ref{sec:selfsim}, we can treat both potentials as functions defined on the shift space $X$.

\begin{lemma}
The map $\vartheta_\omega:X\to\R_{\geq 0}$ is cohomologous to the map $(\rho(\omega)-\Phi_\omega):X\to\R$, i.e.\ there exists a continuous map $g:X\to\R$ such that
$\vartheta_\omega(x)=\rho(\omega)-\Phi_\omega(x)+g(x)-g(\sigma x)$ for all $x\in X$.
\end{lemma}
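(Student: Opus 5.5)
We split the potential directly. For an edge $e=((\beta,j),(\alpha,i))\in\mathcal E$ we have $T^iI^{(n)}_\alpha\subset I_\beta$, and
\[
\vartheta_\omega(e)=-\log\frac{\nu_\omega(T^{i}I^{(n)}_\alpha)}{\nu_\omega(I^{(n)}_\alpha)}-\log\frac{\nu_\omega(I^{(n)}_\alpha)}{\nu_\omega(I_\beta)}
=-\Phi_\omega(e)-\log\nu_\omega(I^{(n)}_\alpha)+\log\nu_\omega(I_\beta).
\]
The plan is therefore to show that $-\log\nu_\omega(I^{(n)}_\alpha)+\log\nu_\omega(I_\beta)$ equals $\rho(\omega)$ plus a coboundary of a function depending on a single coordinate.

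The key input is self-similarity of the conformal measure. Let $R:I^{(n)}\to I$ be the rescaling conjugating $\mathcal{RV}^n(T)$ to $T$, so $R(I^{(n)}_\alpha)=I_\alpha$. We first check that $\nu_\omega|_{I^{(n)}}$, normalized, pushed forward by $R$, is again $\phi_\omega$-conformal for $T$. The return map on $I^{(n)}_\alpha$ is $T^{q^{(n)}_\alpha}$, and its Radon--Nikodym cocycle with respect to $\nu_\omega$ is $e^{S_{q_\alpha}\phi_\omega}$. This equals $e^{\sum_\beta M_{\alpha\beta}\omega_\beta}=e^{(M\omega)_\alpha}=e^{\omega_\alpha}$, because $M\omega=\omega$. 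Hence the pushed-forward measure is a $\phi_\omega$-conformal measure for the induced system $T$. Uniqueness of the $\phi_\omega$-conformal measure, established in \cite{BFKT}, then gives $\nu_\omega(I^{(n)}_\alpha)=c\,\nu_\omega(I_\alpha)$ for a constant $c=\nu_\omega(I^{(n)})$.

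Next we identify $c=e^{-\rho(\omega)}$. By conformality and the tower decomposition \eqref{eq:towers},
\[
\nu_\omega(I_\beta)=\sum_{\alpha}\sum_{i:\beta(\alpha,i)=\beta}\nu_\omega(T^iI^{(n)}_\alpha)=\sum_\alpha M_{\alpha\beta}(\omega)\,\nu_\omega(I^{(n)}_\alpha),
\]
so $\nu_\omega(I_\beta)=c\,(M(\omega)^T v)_\beta$ with $v_\beta=\nu_\omega(I_\beta)$. Thus $v$ is a positive left eigenvector of $M(\omega)$ with eigenvalue $1/c$. Since $M(\omega)$ is primitive, positivity of $v$ forces $1/c$ to be the Perron--Frobenius eigenvalue, so $c=e^{-\rho(\omega)}$.

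Substituting, we get
\[
\vartheta_\omega(e)=\rho(\omega)-\Phi_\omega(e)-\log\nu_\omega(I_\alpha)+\log\nu_\omega(I_\beta).
\]
Set $G(a):=\log\nu_\omega(I_{\alpha})$ for a vertex $a=(\alpha,i)\in\Sigma$, and for $x\in X$ let $g(x):=G(x_0)$. For $x$ with $x_0=(\beta,j)$ and $x_1=(\alpha,i)$ we have $g(x)-g(\sigma x)=\log\nu_\omega(I_\beta)-\log\nu_\omega(I_\alpha)$, which is exactly the needed term. This $g$ is locally constant, hence continuous, and the identity holds pointwise.

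The main obstacle is the self-similarity step, specifically justifying $\nu_\omega(I^{(n)}_\alpha)\propto\nu_\omega(I_\alpha)$ through uniqueness of the conformal measure and the invariance $M\omega=\omega$. In particular one must check that the Birkhoff sum over a full return equals $\omega_\alpha$, which uses the definition of $M_{\alpha\beta}$ as visit counts. If the uniqueness route is inconvenient, an alternative is to use the Perron--Frobenius characterization directly: both $(\nu_\omega(I^{(n)}_\alpha))_\alpha$ and $(\nu_\omega(I_\alpha))_\alpha$ arise as positive eigenvectors after iterating the tower relation.
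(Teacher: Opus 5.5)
Your proof is correct and is essentially the paper's argument. It uses the same splitting of $\vartheta_\omega$, the same locally constant $g(x)=\log\nu_\omega(I_\beta)$ for $x_0=(\beta,j)$, and the same key identity $\nu_\omega(I^{(n)}_\alpha)=e^{-\rho(\omega)}\nu_\omega(I_\alpha)$. The only difference is that the paper cites this identity from Lemma~2.5 of \cite{BFKT}, while you derive it validly from uniqueness of the $\phi_\omega$-conformal measure, $M\omega=\omega$, and Perron--Frobenius applied to the tower relation. There $v\geq 0$, $v\neq 0$ already suffices, and positivity of $v$, which is needed for $g$ to be finite, then follows.
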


\begin{proof}
Let $g:X\to\R$ be a map given by
\[g(x)=\log \nu_{\omega}(I_{\alpha_0})\quad\text{for any}\quad x=(x_k)_{k\in\Z}=(\alpha_k,i_k)_{k\in\Z}\in X.\]
As $\omega$ is an $M$-invariant vector, by Lemma~2.5 in \cite{BFKT}, we have $R_*(\nu_{\omega}|_{R^{-1}I})=e^{-\rho(\omega)}\nu_{\omega}$,
where $R:I^{(n)}\to I$ is the linear rescaling given by $R(x)=e^{\rho_T}x$ and establishes a conjugacy between IETs $\mathcal{RV}^n(T):I^{(n)}\to I^{(n)}$ and $T:I\to I$.
As $I^{(n)}_\alpha=R^{-1}I_\alpha$, it follows that $\nu_\omega(I^{(n)}_\alpha)=e^{-\rho(\omega)}\nu_{\omega}(I_\alpha)$. Hence, for any $x=(\alpha_k,i_k)_{k\in\Z}\in X$, we have
\begin{align*}
				(\vartheta_\omega-(\rho(\omega)-\Phi_{\omega}))(x)&= \log\frac{\nu_\omega(T^{i_1}I^{(n)}_{\alpha_1})}{\nu_\omega(I^{(n)}_{\alpha_1})}-\log \frac{\nu_\omega(T^{i_1}I^{(n)}_{\alpha_{i_1}})}{\nu_\omega(I_{\alpha_{i_0}})}-\rho(\omega)  \\
&= \log \nu_\omega(I_{\alpha_{i_0}})-\log\nu_\omega(I_{\alpha_1})=g(x)-g(\sigma x) .
			\end{align*}
\end{proof}

By the definition of the operators $\Phi\mapsto \overline{\Phi}$ and $\Phi\mapsto \underline{\Phi}$, in view of \eqref{eq:ouline} and \eqref{eq:phito},  the lemma above implies
\begin{gather}
\label{eq:vartheta}
\overline{\vartheta}_{t\omega}=\rho(t\omega)- t\underline{\Phi}_{\omega}\quad\text{and}\quad \underline{\vartheta}_{t\omega}=\rho(t\omega)- t\overline{\Phi}_{\omega}\quad\text{for any}\quad t\geq 0.
%\label{eq:vartheta-}
%\overline{\vartheta}_{t\omega}=\rho(t\omega)- t\overline{\Phi}_{\omega}\quad\text{and}\quad \underline{\vartheta}_{t\omega}=\rho(t\omega)- t\underline{\Phi}_{\omega}\quad\text{for any}\quad t\leq 0.
\end{gather}

\medskip

Let us consider a matrix $\mathcal{M}(\Phi_\omega)=[\mathcal{M}_{(\beta,j)(\alpha,i)}(\Phi_\omega)]_{(\beta,j),(\alpha,i)\in\Sigma}$ associated  (see \eqref{eq:mathcalM}) to the edge potential $\Phi_\omega$
\begin{equation}
\mathcal{M}_{(\beta,j)(\alpha,i)}(\Phi_{\omega})=\left\{\begin{array}{cl}
e^{\omega_{(\alpha,i)}}&\text{if }\beta(\alpha,i)=\beta\\
0&\text{otherwise.}\end{array}\right.
\end{equation}
According to the notation introduced in Section~\ref{section:therm}, $\varrho(\Phi_{\omega})$ is the logarithm of the Perron-Frobenius eigenvalue of the matrix $\mathcal{M}(\Phi_\omega)$.
In the next step, we will show that $\rho(t)=\rho(t\omega)=\varrho(\Phi_{t\omega})$, which makes it possible to apply results from the thermodynamic formalism, i.e.\ Proposition~\ref{prop:rhozero}~and~\ref{prop:rhoder}, to determine the asymptotics of $\rho(t)$ and its derivative. This, in turn, will allow us, via formulas \eqref{eq:dims} and \eqref{eq:Hexp}, to determine the asymptotic behavior of the Hausdorff dimensions of the invariant and conformal measures, as well as of the supremal H\"older exponents of the conjugacies and their inverses, as the parameter $t$ tends to infinity.

\begin{lemma}\label{lem:rhorho}
For any $M$-invariant vector $\omega\in\R^{\mathcal A}$, we have $\rho(\omega)=\varrho(\Phi_{\omega})$.
\end{lemma}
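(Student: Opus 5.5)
The plan is to relate the two matrices directly: $M(\omega)$ is the $\mathcal A\times\mathcal A$ matrix, and $\mathcal M(\Phi_\omega)$ is the $\Sigma\times\Sigma$ matrix. I will show that they share their nonzero spectrum up to a factorization. Define two rectangular nonnegative matrices.

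The first is $A=[A_{\beta,(\alpha,i)}]$, indexed by $\mathcal A\times\Sigma$, with $A_{\beta,(\alpha,i)}=e^{\omega_{(\alpha,i)}}$ if $\beta(\alpha,i)=\beta$ and $0$ otherwise. The second is $B=[B_{(\alpha,i),\gamma}]$, indexed by $\Sigma\times\mathcal A$, with $B_{(\alpha,i),\gamma}=\delta_{\alpha\gamma}$.

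By the definition of the edge set, $\mathcal M_{(\beta,j)(\alpha,i)}(\Phi_\omega)=A_{\beta,(\alpha,i)}$, independently of $j$. Hence
\[\mathcal M(\Phi_\omega)=\widetilde B A,\]
where $\widetilde B_{(\beta,j),\beta'}=\delta_{\beta\beta'}$, which is exactly $B$. So $\mathcal M(\Phi_\omega)=BA$.

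On the other hand, compute
\[(AB)_{\beta\gamma}=\sum_{i}A_{\beta,(\gamma,i)}=\sum_{0\le i<q^{(n)}_\gamma,\ \beta(\gamma,i)=\beta}e^{\omega_{(\gamma,i)}}=M_{\gamma\beta}(\omega).\]
Thus $AB=M(\omega)^T$.

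It is a standard fact that $AB$ and $BA$ have the same nonzero eigenvalues with multiplicities. Transposition preserves the spectrum. Therefore the spectral radii of $M(\omega)$ and $\mathcal M(\Phi_\omega)$ coincide, and taking logarithms gives $\rho(\omega)=\varrho(\Phi_\omega)$.

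The main thing to check carefully is the index bookkeeping: whether $\mathcal M(\Phi_\omega)$ factors as $BA$ with the orientation above, and whether $AB$ gives $M(\omega)$ or its transpose. Either way the spectral radius is unaffected. I also need positivity of the Perron–Frobenius eigenvalue so that it is a genuine common nonzero eigenvalue, and this follows from primitivity of $M(\omega)$.

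As a sanity check I would verify the $\omega=0$ case. There $\mathcal M(0)$ is the adjacency matrix and $M(0)=M$, so the identity says $h_{top}(X)=\rho(0)=\rho(T)$, which is consistent with the expected self-similarity.
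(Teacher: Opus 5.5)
Your proof is correct and is essentially the paper's argument: your $A$ and $B$ are exactly the paper's $D(\omega)$ and $E$, giving $\mathcal{M}(\Phi_\omega)=E\,D(\omega)$ and $M^T(\omega)=D(\omega)\,E$. The only difference is the last step: the paper transports the Perron--Frobenius eigenvectors ($r=E\nu$, $\ell=D^T(\omega)\theta$) to $\mathcal{M}(\Phi_\omega)$, whereas you use the fact that $AB$ and $BA$ have the same nonzero spectrum, which gives equal spectral radii directly and skips the implicit positivity argument identifying the transported eigenvalue as the Perron--Frobenius one.
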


\begin{proof}
Recall that $\rho(\omega)$ is the logarithm of the Perron-Frobenius of the matrix $M(\omega)=[M_{\alpha\beta}(\omega)]_{\alpha,\beta\in\mathcal A}$ defined by \eqref{eq:matxM}.
Therefore, it suffices to show that the Perron–Frobenius eigenvalues of both matrices $M(\omega)$ and $\mathcal{M}(\Phi_\omega)$ are equal. To begin with, we will exhibit two nonnegative matrices $E$ and $D(\omega)$ that reveal a direct connection between $M(\omega)$ and $\mathcal{M}(\Phi_\omega)$, that is, $M^T(\omega)=D(\omega)\, E$ and $\mathcal{M}(\Phi_\omega)=E\, D(\omega)$. Indeed, let \[E=[E_{(\beta,j),\alpha}]_{(\beta,j)\in\Sigma,\alpha\in\mathcal A}\quad\text{and}\quad D(\omega)=[D_{\beta,(\alpha,i)}(\omega)]_{\beta\in\mathcal{A},(\alpha,i)\in\Sigma}\]
given by
\[D_{\beta,(\alpha,i)}(\omega)=
		\left\{
		\begin{array}{cc}
			e^{\omega_{(\alpha,i)}}&\text{if }\beta(\alpha,i)=\beta,\\
			0 &\text{if }\beta(\alpha,i)\neq\beta;
		\end{array}
		\right.\quad\text{and}\quad
		E_{(\beta,j),\alpha}=
		\left\{
		\begin{array}{cc}
			1&\text{if }\beta=\alpha,\\
			0 &\text{if }\beta \neq\alpha.
		\end{array}
		\right.\]
Then
\begin{gather*}
				(E\,D(\omega))_{(\beta,j),(\alpha,i)}=\sum_{\gamma\in\mathcal{A}} E_{(\beta,j),\gamma}\,D_{\gamma,(\alpha,i)}(\omega)=D_{\beta,(\alpha,i)}(\omega)= \mathcal M(\Phi_{\omega})_{(\beta,j),(\alpha,i)},\\
				(D(\omega)\,E)_{\beta\alpha}=\sum_{(\gamma,k)\in\Sigma} D_{\beta,(\gamma,k)}(\omega)E_{(\gamma,k),\alpha}=\sum_{\substack{0\leq k<q^{(n)}_\alpha\\
\beta(\alpha,k)=\beta}}e^{\omega_{(\alpha,k)}} = M_{\alpha\beta}(\omega),
			\end{gather*}
and hence $\mathcal{M}(\Phi_\omega)=E\, D(\omega)$ and $M^T(\omega)=D(\omega)\, E$.

Let $\nu=(\nu_{\alpha})_{\alpha\in\mathcal A}, \theta=(\theta_\alpha)_{\alpha\in\mathcal A} \in \mathbb{R}_{>0}^{\mathcal{A}}$ be respectively the left and the right Perron-Frobenius eigenvectors of the matrix $M(\omega)$, with the Perron-Frobenius eigenvalue $e^{\rho(\omega)}$. Then the vectors $\ell:= D^T(\omega)\theta\in\R_{\geq 0}^{\Sigma} $ and $r:= E\nu\in\R_{\geq 0}^{\Sigma}$ are the left and the right Perron-Frobenius eigenvectors of the matrix $\mathcal{M}(\Phi_\omega)$. Indeed,
\begin{align*}
\mathcal{M}(\Phi_\omega) r&=\mathcal{M}(\Phi_\omega) E\nu= ED(\omega)E\nu = E M^T \nu = e^{\rho(\omega)} E\nu= e^{\rho(\omega)}r,\\
\ell^T\mathcal{M}(\Phi_\omega)&=\theta^T D(\omega) \mathcal{M}(\Phi_\omega) =  \theta^T D(\omega)ED(\omega)\\
& = \theta^T M^T(\omega)D(\omega)=e^{\rho(\omega)}\theta^TD(\omega)=e^{\rho(\omega)}\ell^T.
\end{align*}
It follows that the Perron–Frobenius eigenvalues of both matrices $M(\omega)$ and $\mathcal{M}(\Phi_\omega)$ are equal, which completes the proof.
\end{proof}

We are now ready to prove the main results of the article. Suppose that $T=(\pi,\lambda)$ is an IET of hyperbolic periodic type, with its self-similarity matrix $M$. Let $\mathcal{G}=(\Sigma,\mathcal{E})$ be the associated graph and $X\subset \Sigma^\Z$ the corresponding shift of finite type. Let $\omega\in\R^{\mathcal A}$ be an $M$-invariant vector, with the corresponding edge potential $\Phi_\omega:\mathcal{E}\to\R$.
Denote by $m_{\Phi_\omega}$ the unique Gibbs measure on $X$ related to the potential $\Phi_\omega$.
Let $\overline{X}(\Phi_\omega)\subset X$ be the $\Phi_\omega$-maximizing shift of finite type.

Let us consider an AIET $f_\omega\in\operatorname{Aff}(T,\omega)$ conjugated to the IET $T$ via $h_{\omega}:I\to I$. Denote by $\mu_\omega$ the unique $f_\omega$-invariant measure, and by $\nu_{\omega}$ the unique $\phi_\omega$-conformal measure.
	
\begin{theorem}\label{dimension_conformal_limit}
Suppose that $T=(\pi,\lambda)$ is an IET of hyperbolic periodic type, with its self-similarity matrix $M$. For any $M$-invariant vector $\omega\in\R^{\mathcal A}$, there exists $\eta>0$ such that for $t\geq 0$, we have
\begin{gather*}
\dim_H(\nu_{t\omega})=\frac{h_{top}(\overline{X}(\Phi_\omega))+O(e^{-\eta t})}{h_{top}(X)},\\
\dim_H(\mu_{t\omega})=\frac{h_{top}(X)}{(\overline{\Phi}_\omega-\int_X\Phi_\omega\,d\,m_{\Phi_0})t+h_{top}(\overline{X}(\Phi_\omega))+O(e^{-\eta t})}.
\end{gather*}
In particular,
\[\lim_{t\to +\infty} \dim_H(\nu_{t\omega}) = \frac{h_{top}(\overline{X}(\Phi_\omega))}{h_{top}(X)},\quad \lim_{t\to +\infty} t\dim_H(\mu_{t\omega}) =\frac{h_{top}(X)}{\overline{\Phi}_\omega-\int_X\Phi_\omega\,d\,m_{\Phi_0}}.\]
\end{theorem}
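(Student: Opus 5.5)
The plan is to substitute the thermodynamic asymptotics into the explicit formulas \eqref{eq:dims}. By Lemma~\ref{lem:rhorho} together with \eqref{eq:phito}, for every real $t$ we have $\rho(t)=\rho(t\omega)=\varrho(\Phi_{t\omega})=\varrho(t\Phi_\omega)$. So $\rho$ is the pressure function $t\mapsto P(t\Phi_\omega,X)$ of the locally constant potential $\Phi_\omega$ on the SFT $X$. In particular $\rho(0)=\varrho(0)=P(0,X)=h_{top}(X)$, since $\mathcal{M}(0)$ is the adjacency matrix of $\mathcal G$.

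Next I apply Proposition~\ref{prop:rhozero} with $\Psi=0$ and $\Phi=\Phi_\omega$. It gives
\[\rho(t)=P(0,\overline{X}(\Phi_\omega))+\overline{\Phi}_\omega t+O(e^{-\eta t})=h_{top}(\overline{X}(\Phi_\omega))+\overline{\Phi}_\omega t+O(e^{-\eta t}),\]
where I use that the zero pressure on the subshift $\overline{X}(\Phi_\omega)$ is its topological entropy. Proposition~\ref{prop:rhoder} gives $\rho'(t)=\overline{\Phi}_\omega+O(e^{-\eta t})$. The derivative at zero comes from the identity \eqref{eq:dervar} evaluated at $t=0$, namely $\rho'(0)=\int_X\Phi_\omega\,dm_{\Phi_0}$, where $m_{\Phi_0}=m_0$ is the Gibbs measure for the zero potential (the Parry measure).

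Substituting into \eqref{eq:dims}, the conformal measure gives
\[\rho(t)-\rho'(t)t=h_{top}(\overline{X}(\Phi_\omega))+\overline{\Phi}_\omega t-\overline{\Phi}_\omega t+O(e^{-\eta t})+O(te^{-\eta t}).\]
Shrinking $\eta$ slightly absorbs $te^{-\eta t}$ into $O(e^{-\eta' t})$, and dividing by $\rho(0)=h_{top}(X)$ yields the first formula. For the invariant measure,
\[\rho(t)-\rho'(0)t=\Big(\overline{\Phi}_\omega-\int_X\Phi_\omega\,dm_{\Phi_0}\Big)t+h_{top}(\overline{X}(\Phi_\omega))+O(e^{-\eta t}),\]
and dividing $\rho(0)=h_{top}(X)$ by this expression gives the second formula. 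The limit statements follow at once: the first is immediate, and for the second I multiply by $t$ and let $t\to+\infty$. The second limit requires $\overline{\Phi}_\omega-\int\Phi_\omega\,dm_{\Phi_0}>0$.

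The main obstacle is justifying that positivity, together with two smaller points. First, $\rho'(0)$ must be correctly identified through \eqref{eq:dervar}. Second, one must make sure that Proposition~\ref{prop:rhoder} is applied with the same $\eta$, reduced if necessary. For the positivity, I would argue as follows. If the difference were zero, the Parry measure $m_0$, which has full support, would be $\Phi_\omega$-maximizing. Then every elementary loop would have average $\overline{\Phi}_\omega$, so $\Phi_\omega$ would be cohomologous to the constant $\overline{\Phi}_\omega$. In that case $\rho(t)$ would be affine in $t$ and $\dim_H(\mu_{t\omega})$ would stay bounded away from zero, which is harmless for the displayed formula. For a nonzero central $\omega$, however, one should rule this out. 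Since $\langle\omega,\lambda\rangle=0$, the $\mu$-average of $\phi_\omega$ is zero, while the cohomology to a constant would force $\omega$ to be a coboundary and hence stable type. I would check this via the $\omega_{(\alpha,i)}$ description of $\Phi_\omega$.
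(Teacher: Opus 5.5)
Your argument is correct and is essentially the paper's proof. You identify $\rho(t)=\varrho(t\Phi_\omega)$ via Lemma~\ref{lem:rhorho} and \eqref{eq:phito}, take $\rho(0)=h_{top}(X)$ and $\rho'(0)=\int_X\Phi_\omega\,dm_{\Phi_0}$ from \eqref{eq:PFpressure} and \eqref{eq:dervar}, plug these and the asymptotics of Propositions~\ref{prop:rhozero} and~\ref{prop:rhoder} into \eqref{eq:dims}, and shrink $\eta$ to absorb the $te^{-\eta t}$ term.

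The positivity you flag is not a real obstacle, and the paper does not treat it. $\overline{\Phi}_\omega\ge\int_X\Phi_\omega\,dm_{\Phi_0}$ holds automatically since $m_{\Phi_0}$ is $\sigma$-invariant, and equality (as for $\omega=0$) only makes the limit $+\infty$. For nonzero central $\omega$, strictness follows from the strict inequality $\mathfrak{H}(h_{-\omega})<\dim_H(\mu_{-\omega})$ in \eqref{eq:Hexp} combined with \eqref{eq:vartheta} and \eqref{eq:dims}. Your $\langle\omega,\lambda\rangle=0$/coboundary sketch does not establish it as written.
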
		
	
\begin{proof}
By \eqref{eq:dims}, we have
\begin{equation*}
\dim_H(\mu_{t\omega})=\frac{\rho(0)}{\rho(t)-\rho'(0)t} \quad \text{and}\quad
	\dim_H(\nu_{t\omega})=\frac{\rho(t)-\rho'(t)t}{\rho(0)},
\end{equation*}
where $\rho(t)=\rho(t\omega)$ is the Perron-Frobenius eigenvalue of the matrix $M(t\omega)$. In view of Lemma~\ref{lem:rhorho}, $\rho(t)=\varrho(\Phi_{t\omega})$, where $\varrho(\Phi_{t\omega})$ is the Perron-Frobenius eigenvalue of the matrix $\mathcal M(\Phi_{t\omega})$. As $\Phi_{t\omega}=t\Phi_{\omega}$, by Proposition~\ref{prop:rhozero} and \ref{prop:rhoder}, there exists $\eta>0$ such that
\begin{align}\label{eq:rhot}
\begin{aligned}
\rho(t)&=\varrho(t\Phi_\omega)=P(0,\overline{X}(\Phi_\omega))+\overline{\Phi}_\omega\,t+O(e^{-\eta t})\\
&=h_{top}(\overline{X}(\Phi_\omega))+\overline{\Phi}_\omega\,t+O(e^{-\eta t}),
\end{aligned}
\end{align}
\[\rho'(t)=\frac{d}{dt}\varrho(t\Phi_\omega)=\overline\Phi_\omega+O(e^{-\eta t}).\]
Moreover, by \eqref{eq:PFpressure} and \eqref{eq:dervar},
\begin{equation}\label{eq:rho0}
\rho(0)=P(0,X)=h_{top}(X)\quad\text{and}\quad\rho'(0)=\int_X\Phi_\omega\,d\,m_{\Phi_0}.
\end{equation}
It follows that
\begin{align*}
\dim_H(\mu_{t\omega})&=\frac{h_{top}(X)}{h_{top}(\overline{X}(\Phi_\omega))+\overline{\Phi}_\omega\,t+O(e^{-\eta t})-t\int_X\Phi_\omega\,d\,m_{\Phi_0}},
\\
	\dim_H(\nu_{t\omega})&=\frac{h_{top}(\overline{X}(\Phi_\omega))+\overline{\Phi}_\omega\,t+O(e^{-\eta t})-t\big(\overline\Phi_\omega+O(e^{-\eta t})\big)}{h_{top}(X)}.
\end{align*}
In this way, by slightly decreasing the constant $\eta>0$, we obtain the main claim of the theorem.
				\end{proof}
			
\begin{theorem}\label{Holder_conformal_limit}
Suppose that $T=(\pi,\lambda)$ is an IET of hyperbolic periodic type, with its self-similarity matrix $M$. For any $M$-invariant vector $\omega\in\R^{\mathcal A}$, there exists $\eta>0$ such that for $t\geq 0$, we have
\begin{gather}
\label{eq:Hol1}\mathfrak{H}(h^{-1}_{t\omega})=\frac{h_{top}(\overline{X}(\Phi_\omega))+O(e^{-\eta t})}{h_{top}(X)}.\\
\label{eq:Hol2}\mathfrak{H}(h_{t\omega})=\frac{h_{top}(X)}{(\overline{\Phi}_\omega-\underline{\Phi}_\omega)t+h_{top}(\overline{X}(\Phi_\omega))+O(e^{-\eta t})}.
\end{gather}
In particular,
\[\lim_{t\to +\infty} \mathfrak{H}(h^{-1}_{t\omega}) = \frac{h_{top}(\overline{X}(\Phi_\omega))}{h_{top}(X)}\quad\text{and}\quad \lim_{t\to +\infty} t\,\mathfrak{H}(h_{t\omega}) =\frac{h_{top}(X)}{\overline{\Phi}_\omega-\underline{\Phi}_\omega}.\]
\end{theorem}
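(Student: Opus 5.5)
The plan is to combine the formulas \eqref{eq:Hexp} from \cite{BFKT} with the cohomology relation \eqref{eq:vartheta} and the asymptotics \eqref{eq:rhot}, \eqref{eq:rho0} already obtained in the proof of Theorem~\ref{dimension_conformal_limit}. Applying \eqref{eq:Hexp} to the $M$-invariant vector $t\omega$ gives
\[
\mathfrak{H}(h_{t\omega})=\frac{\rho(0)}{\overline{\vartheta}_{t\omega}},\qquad \mathfrak{H}(h^{-1}_{t\omega})=\frac{\underline{\vartheta}_{t\omega}}{\rho(0)} .
\]
By \eqref{eq:vartheta} we have $\overline{\vartheta}_{t\omega}=\rho(t)-t\underline{\Phi}_\omega$ and $\underline{\vartheta}_{t\omega}=\rho(t)-t\overline{\Phi}_\omega$ for $t\ge 0$. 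Here we use that cohomologous edge potentials have the same averages along periodic orbits, so the coboundary $g-g\circ\sigma$ does not change $\overline{\cdot}$ or $\underline{\cdot}$. We also use that $\overline{(c-t\Phi_\omega)}=c-t\underline{\Phi}_\omega$ for $t\ge0$, by \eqref{eq:ouline} and \eqref{eq:phito}.

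Next we substitute $\rho(0)=h_{top}(X)$ from \eqref{eq:rho0}, and $\rho(t)=h_{top}(\overline{X}(\Phi_\omega))+\overline{\Phi}_\omega t+O(e^{-\eta t})$ from \eqref{eq:rhot}. The latter relies on Lemma~\ref{lem:rhorho} and Proposition~\ref{prop:rhozero} with $\Psi=0$, since $P(0,\overline{X}(\Phi_\omega))=h_{top}(\overline{X}(\Phi_\omega))$. For the inverse this yields
\[
\underline{\vartheta}_{t\omega}=h_{top}(\overline{X}(\Phi_\omega))+O(e^{-\eta t}),
\]
which gives \eqref{eq:Hol1} after dividing by $h_{top}(X)$. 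For the conjugacy it yields
\[
\overline{\vartheta}_{t\omega}=(\overline{\Phi}_\omega-\underline{\Phi}_\omega)t+h_{top}(\overline{X}(\Phi_\omega))+O(e^{-\eta t}),
\]
and inverting gives \eqref{eq:Hol2}.

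The limits then follow directly, as long as $\overline{\Phi}_\omega-\underline{\Phi}_\omega>0$. If this difference were $0$, the second limit would be read as $+\infty$; for a nonzero central $\omega$ it is positive, since otherwise $\Phi_\omega$ would be cohomologous to a constant. That constant would be $\rho'(0)$, and this would force $\vartheta_{t\omega}$ to have constant averages, contradicting the strict inequalities in \eqref{eq:Hexp}.

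The main point needing care is the validity of \eqref{eq:vartheta}. The cohomology lemma is stated for each fixed vector, so I will apply it to $t\omega$ using $\Phi_{t\omega}=t\Phi_\omega$ and $\rho(t\omega)=\rho(t)$. I also have to track the sign of $t$ when passing $\overline{\cdot}$ through $-t\Phi_\omega$, which is why the statement is restricted to $t\ge0$. Everything else is bookkeeping of the $O(e^{-\eta t})$ terms: in the denominator of \eqref{eq:Hol2} the error stays additive, and no shrinking of $\eta$ is needed.
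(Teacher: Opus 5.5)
Your proposal is correct and follows the paper's proof: you combine \eqref{eq:Hexp} with the cohomology relation \eqref{eq:vartheta} to get $\mathfrak{H}(h_{t\omega})=\rho(0)/(\rho(t)-t\underline{\Phi}_\omega)$ and $\mathfrak{H}(h^{-1}_{t\omega})=(\rho(t)-t\overline{\Phi}_\omega)/\rho(0)$, and then substitute \eqref{eq:rhot} and \eqref{eq:rho0}. Your extra argument that $\overline{\Phi}_\omega-\underline{\Phi}_\omega>0$ for nonzero $\omega$ does not appear in the paper's proof of this theorem, but it is a sound supplement that makes the second limit meaningful.
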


\begin{proof}
By \eqref{eq:Hexp} and \eqref{eq:vartheta}, we have
\begin{equation}\label{eq:formHex}
\mathfrak{H}(h_{t\omega})=\frac{\rho(0)}{\overline{\vartheta}_{t\omega}}=\frac{\rho(0)}{\rho(t)- t\underline{\Phi}_{\omega}} \quad \text{and}\quad
	\mathfrak{H}(h^{-1}_{t\omega})=\frac{\underline{\vartheta}_{t\omega}}{\rho(0)}=\frac{\rho(t)- t\overline{\Phi}_{\omega}}{\rho(0)}.
\end{equation}
In view of \eqref{eq:rhot} and \eqref{eq:rho0}, this implies \eqref{eq:Hol1} and \eqref{eq:Hol2}.
\end{proof}

\begin{remark}
To understand the asymptotics of the quantities under consideration in the case when $t\to-\infty$, it suffices to apply the last two theorems with $\omega$ replaced by $-\omega$.
Since $\Phi_{-\omega}=-\Phi_\omega$, the minimizing and maximizing subshifts exchange their roles. Summarizing, we then obtain:
\begin{gather*}
\lim_{t\to -\infty} \mathfrak{H}(h^{-1}_{t\omega}) = \frac{h_{top}(\underline{X}(\Phi_\omega))}{h_{top}(X)}=\lim_{t\to -\infty} \dim_H(\nu_{t\omega}),\quad\text{and}\\\
\lim_{t\to -\infty} |t|\,\mathfrak{H}(h_{t\omega}) =\frac{h_{top}(X)}{\overline{\Phi}_\omega-\underline{\Phi}_\omega} <\frac{h_{top}(X)}{\int_X\Phi_\omega\,d\,m_{\Phi_0}-\underline{\Phi}_\omega}=\lim_{t\to -\infty} |t|\dim_H(\mu_{t\omega}).
\end{gather*}
\end{remark}

\begin{remark}\label{rem:mono}
Theorems~\ref{dimension_conformal_limit} and \ref{Holder_conformal_limit} provide us with information about the limiting behavior of the Hausdorff dimension of the measures and of the supremal H\"older exponent of the conjugacies and their inverses as $t$ tends to infinity. In addition, we know that this behavior is monotonic (with respect to the parameter $t$) both on the positive and on the negative half-line. Indeed, the monotonicity of the Hausdorff dimensions was established in Theorem~5.3 of \cite{BFKT}. The monotonicity of the supremal H\"older exponents follows easily from \eqref{eq:formHex} and \eqref{eq:dervar}, since
\begin{align*}
\tfrac{d}{dt}(\rho(t)-t\underline{\Phi}_\omega)=\rho'(t)-\underline{\Phi}_\omega=\int_X \Phi_\omega\,d\,m_{t\Phi_\omega}-\underline{\Phi}_\omega\geq 0,\\
\tfrac{d}{dt}(\rho(t)-t\overline{\Phi}_\omega)=\rho'(t)-\overline{\Phi}_\omega=\int_X \Phi_\omega\,d\,m_{t\Phi_\omega}-\overline{\Phi}_\omega\leq 0.
\end{align*}
\end{remark}

			\section{Example}
Based on Theorem~\ref{Holder_conformal_limit} and Remark~\ref{rem:mono}, we know that for any IET $T$ of hyperbolic periodic type, if $\omega$ is an invariant vector of its self-similarity matrix $M$ and $f_{t\omega}\in\operatorname{Aff}(T,t\omega)$, then the supremal H\"older exponent of the conjugacy $h_{t\omega}$ between $f_{t\omega}$ and $T$ converges monotonically to zero as $t\to\infty$, and we know the precise rate of its decay. On the other hand, the supremal H\"older exponent of the inverse conjugacy $h^{-1}_{t\omega}$ is bounded from below by the constant
$\min\big\{\frac{h_{top}(\overline{X}(\Phi_\omega))}{h_{top}(X)}, \frac{h_{top}(\underline{X}(\Phi_\omega))}{h_{top}(X)}\big\}$.
In this section, we give a concrete example for which this constant is positive and compute its value. In this way, we construct an explicit one-parameter family of AIETs for which we observe the disparity between the regularity of the conjugacy and that of its inverse announced in the introduction.

We begin with an example of a $5$-IET of hyperbolic periodic type, which was already considered in \cite{BF}. We used Python for all computations.
Consider $\mathcal A:=\{A,B,C,D,E\}$ and the symmetric permutation \begin{equation*}
				\pi = \Bigl(\begin{matrix}
					A& B & C & D & E \\
					E& D & C& B & A
				\end{matrix}\Bigr),
			\end{equation*}
and a closed path in the Rauzy graph starting at $\pi$ and following the edges labeled by: ttbbtbtbbbtb. The corresponding self-similarity matrix is of the form
			{\tiny\begin{equation*}
				M=
				\left(\begin{matrix}
					1& 1 & 0 & 0 & 2 \\
					1& 2 & 0& 0 & 3\\
					1& 0 & 2 & 0 & 2 \\
					1 &0 &3 & 2 &2 \\
					1 &0 &2 &1 & 2
				\end{matrix}\right)
			\end{equation*}}
and is primitive ($M^2$ is positive) and of hyperbolic type; here $g=2$ and $\kappa=2$. The Perron-Frobenius eigenvalue of $M$ is equal to
\[\theta_0:=2+\frac{1}{2}\sqrt{3}+\frac{1}{2}\sqrt{15+8\sqrt{3}}\approx 5.55,\]
and
\begin{align*}
\lambda&=
\Big(
\sqrt{3},
\frac{3}{2}-\sqrt{3}+\sqrt{15+8\sqrt{3}}-\frac{1}{2}\sqrt{3}\,\sqrt{15+8\sqrt{3}},\\
&\quad
-1+\frac{3}{2}\sqrt{3}-\frac{3}{2}\sqrt{15+8\sqrt{3}}+\sqrt{3}\,\sqrt{15+8\sqrt{3}},
1,
\frac{1}{2}\sqrt{3}+\frac{1}{2}\sqrt{15+8\sqrt{3}}
\Big)
\end{align*}
is its left Perron-Frobenius eigenvector. Then $T=(\pi,\lambda)$ is an IET of hyperbolic periodic type (with period $n=12$) with the self-similarity matrix $M$. As $\kappa=2$, the right $M$-invariant space is one-dimensional and generated by $\omega:=(-1,-2,-1,2,1)$. Then, the associated  matrix $M(t\omega)$ is given by
			{\tiny\begin{equation*}
				\left(\begin{matrix}
					1& 1 & 0 & 0 & e^{-t}+e^{-2t} \\
					1& 1+e^{-t} & 0& 0 & e^{-t}+e^{-2t}+e^{-3t}\\
					1& 0 & 1+e^{-t} & 0 & e^{-t}+e^{-2t} \\
					1 &0 &e^{2t}+e^{t}+1 & 1+e^{-t} &e^{t}+e^{-t} \\
					1 &0 &e^{t}+1 &e^{-t} & 1+e^{-t}
				\end{matrix}\right).
			\end{equation*}}
For any real $t$, let us consider an AIET $f_{t\omega}\in\operatorname{Aff}(T,t\omega)$. Then the length of intervals exchanged by $f_{t\omega}$ are given by coordinates of the left Perron-Frobenius eigenvector  of the matrix $M(t\omega)$.
			For $t=1$, its left normalized Perron-Frobenius eigenvector is approximately equal to
			\[(0.258620,
			0.103498,
			0.415430,
			0.028547,
			0.193905).\]
			
As $q^{(n)}_A=4$, $q^{(n)}_B=6$, $q^{(n)}_C=5$, $q^{(n)}_D=8$, and $q^{(n)}_E=6$, the extended alphabet is
\begin{gather*}
\Sigma=\big\{A0 ,\, A1 ,\, A2 ,\, A3 ,\, B0 ,\, B1 ,\, B2 ,\, B3 ,\, B4 ,\, B5 ,\, C0 ,\, C1 ,\, C2 ,\, C3 ,\, C4 ,\\
D0 ,\, D1 ,\, D2 ,\, D3 ,\, D4 ,\, D5 ,\, D6 ,\, D7 ,\, E0 ,\, E1 ,\, E2 ,\, E3 ,\, E4 ,\, E5\}.
\end{gather*}
			\begin{table}[h!]
				\centering
				\resizebox{\textwidth}{!}{
					$\begin{array}{c|cccc|cccccc|ccccc|cccccccc|cccccc}
						& A0 & A1 & A2 & A3 & B0 & B1 & B2 & B3 & B4 & B5 & C0 & C1 & C2 & C3 & C4 & D0 & D1 & D2 & D3 & D4 & D5 & D6 & D7 & E0 & E1 & E2 & E3 & E4 & E5 \\ \hline
						\mathbf{A} & 1 & 0 & 0 & 0 & 1 & 0 & 0 & 0 & 0 & 0 & 1 & 0 & 0 & 0 & 0 & 1 & 0 & 0 & 0 & 0 & 0 & 0 & 0 & 1 & 0 & 0 & 0 & 0 & 0 \\
						\mathbf{B} & 0 & 0 & 1 & 0 & 0 & 0 & 1 & 0 & e^{-t} & 0 & 0 & 0 & 0 & 0 & 0 & 0 & 0 & 0 & 0 & 0 & 0 & 0 & 0 & 0 & 0 & 0 & 0 & 0 & 0 \\
						\mathbf{C} & 0 & 0 & 0 & 0 & 0 & 0 & 0 & 0 & 0 & 0 & 0 & 0 & 1 & e^{-t} & 0 & 0 & 0 & 1 & 0 & e^{t} & 0 & e^{2t} & 0 & 0 & 0 & 1 & 0 & e^{t} & 0 \\
						\mathbf{D} & 0 & 0 & 0 & 0 & 0 & 0 & 0 & 0 & 0 & 0 & 0 & 0 & 0 & 0 & 0 & 0 & 0 & 0 & e^{-t} & 0 & 1 & 0 & 0 & 0 & 0 & 0 & e^{-t} & 0 & 0 \\
						\mathbf{E} & 0 & e^{-t} & 0 & e^{-2t} & 0 & e^{-t} & 0 & e^{-2t} & 0 & e^{-3t} & 0 & e^{-t} & 0 & 0 & e^{-2t} & 0 & e^{-t} & 0 & 0 & 0 & 0 & 0 & e^{t} & 0 & e^{-t} & 0 & 0 & 0 & 1 \\ \hline
					\end{array}$
				}
				\caption{Simplified version of the matrix $\mathcal{M}(t\Phi_\omega)$}\label{tab1}
			\end{table}
Since the values of the edge potential  $\Phi_\omega((\beta,j),(\alpha,i))$ do not depend on the coordinate $j$, all rows of the matrix $\mathcal{M}(t\Phi_\omega)$ indexed by $(\beta,j)$, for $0\leq j< q^{(n)}_\beta$,
are identical. Therefore, in order to simplify the presentation of the matrix $\mathcal{M}(t\Phi_\omega)$ in Table~\ref{tab1}, we do not repeat the identical rows. In this way, we obtain a simplified matrix whose rows are indexed by the elements of the alphabet $\mathcal{A}$.

The next step (this time computationally demanding) is to determine $\overline{\Phi}_\omega=0$, $\underline{\Phi}_\omega=-1$, the $\Phi_\omega$-maximizing and $\Phi_\omega$-minimizing elementary loops in the graph $\mathcal{G}=(\Sigma,\mathcal{E})$, as well as to identify the corresponding $\Phi_\omega$-maximizing and $\Phi_\omega$-minimizing subshifts $\overline{X}(\Phi_\omega)$ and $\underline{X}(\Phi_\omega)$ together with their graphs $\overline{\mathcal{G}}=(\overline{\Sigma},\overline{\mathcal{E}})$ and $\underline{\mathcal{G}}=(\underline{\Sigma},\underline{\mathcal{E}})$.
The outcome of these computations consists of the reduced sets of vertices
\begin{gather*}
\overline{\Sigma}=\big\{A0 ,\, A1 ,\, A2 ,\, B0 ,\, B1 ,\, B2 ,\, C0 ,\, C1 ,\, C2 ,\, D5 ,\, D6 ,\, D7 ,\, E3 ,\, E4 ,\, E5\big\},\\
\underline{\Sigma}=\big\{A2 ,\, A3 ,\, B4 ,\, B5 ,\, C3 ,\, C4 ,\, D0 ,\, D1 ,\, D2 ,\, D3 ,\, E0 ,\, E1 ,\, E2 ,\, E3\big\},
\end{gather*}
and the adjacency matrices of the graphs $\overline{\mathcal{G}}$ and $\underline{\mathcal{G}}$ presented in Tables~\ref{tab2} and ~\ref{tab3} respectively.
			\begin{table}[ht]
				\centering
				{\tiny
				\setlength{\tabcolsep}{2.5pt}
				\begin{tabular}{r|*{15}{c}}
					& A0 & A1 & A2 & B0 & B1 & B2 & C0 & C1 & C2 & D5 & D6 & D7 & E3 & E4 & E5 \\ \hline
					A0 & 1 & 0 & 0 & 1 & 0 & 0 & 1 & 0 & 0 & 0 & 0 & 0 & 0 & 0 & 0 \\
					A1 & 1 & 0 & 0 & 1 & 0 & 0 & 1 & 0 & 0 & 0 & 0 & 0 & 0 & 0 & 0 \\
					A2 & 1 & 0 & 0 & 1 & 0 & 0 & 1 & 0 & 0 & 0 & 0 & 0 & 0 & 0 & 0 \\
					B0 & 0 & 0 & 1 & 0 & 0 & 1 & 0 & 0 & 0 & 0 & 0 & 0 & 0 & 0 & 0 \\
					B1 & 0 & 0 & 1 & 0 & 0 & 1 & 0 & 0 & 0 & 0 & 0 & 0 & 0 & 0 & 0 \\
					B2 & 0 & 0 & 1 & 0 & 0 & 1 & 0 & 0 & 0 & 0 & 0 & 0 & 0 & 0 & 0 \\
					C0 & 0 & 0 & 0 & 0 & 0 & 0 & 0 & 0 & 1 & 0 & 1 & 0 & 0 & 1 & 0 \\
					C1 & 0 & 0 & 0 & 0 & 0 & 0 & 0 & 0 & 1 & 0 & 1 & 0 & 0 & 1 & 0 \\
					C2 & 0 & 0 & 0 & 0 & 0 & 0 & 0 & 0 & 1 & 0 & 1 & 0 & 0 & 1 & 0 \\
					D5 & 0 & 0 & 0 & 0 & 0 & 0 & 0 & 0 & 0 & 1 & 0 & 0 & 1 & 0 & 0 \\
					D6 & 0 & 0 & 0 & 0 & 0 & 0 & 0 & 0 & 0 & 1 & 0 & 0 & 1 & 0 & 0 \\
					D7 & 0 & 0 & 0 & 0 & 0 & 0 & 0 & 0 & 0 & 1 & 0 & 0 & 1 & 0 & 0 \\
					E3 & 0 & 1 & 0 & 0 & 1 & 0 & 0 & 1 & 0 & 0 & 0 & 1 & 0 & 0 & 1 \\
					E4 & 0 & 1 & 0 & 0 & 1 & 0 & 0 & 1 & 0 & 0 & 0 & 1 & 0 & 0 & 1 \\
					E5 & 0 & 1 & 0 & 0 & 1 & 0 & 0 & 1 & 0 & 0 & 0 & 1 & 0 & 0 & 1 \\
					\hline
				\end{tabular}}
				\caption{Adjacency matrix of the $\Phi_\omega$-maximizing graph}\label{tab2}
			\end{table}
			\begin{table}[ht]
				\centering
				\tiny
				\setlength{\tabcolsep}{2.5pt}
				\begin{tabular}{r|*{14}{c}}
					& A2 & A3 & B4 & B5 & C3 & C4 & D0 & D1 & D2 & D3 & E0 & E1 & E2 & E3 \\ \hline
					A2 & 0 & 0 & 0 & 0 & 0 & 0 & 1 & 0 & 0 & 0 & 1 & 0 & 0 & 0 \\
					A3 & 0 & 0 & 0 & 0 & 0 & 0 & 1 & 0 & 0 & 0 & 1 & 0 & 0 & 0 \\
					B4 & 1 & 0 & 1 & 0 & 0 & 0 & 0 & 0 & 0 & 0 & 0 & 0 & 0 & 0 \\
					B5 & 1 & 0 & 1 & 0 & 0 & 0 & 0 & 0 & 0 & 0 & 0 & 0 & 0 & 0 \\
					C3 & 0 & 0 & 0 & 0 & 1 & 0 & 0 & 0 & 1 & 0 & 0 & 0 & 1 & 0 \\
					C4 & 0 & 0 & 0 & 0 & 1 & 0 & 0 & 0 & 1 & 0 & 0 & 0 & 1 & 0 \\
					D0 & 0 & 0 & 0 & 0 & 0 & 0 & 0 & 0 & 0 & 1 & 0 & 0 & 0 & 1 \\
					D1 & 0 & 0 & 0 & 0 & 0 & 0 & 0 & 0 & 0 & 1 & 0 & 0 & 0 & 1 \\
					D2 & 0 & 0 & 0 & 0 & 0 & 0 & 0 & 0 & 0 & 1 & 0 & 0 & 0 & 1 \\
					D3 & 0 & 0 & 0 & 0 & 0 & 0 & 0 & 0 & 0 & 1 & 0 & 0 & 0 & 1 \\
					E0 & 0 & 1 & 0 & 1 & 0 & 1 & 0 & 1 & 0 & 0 & 0 & 1 & 0 & 0 \\
					E1 & 0 & 1 & 0 & 1 & 0 & 1 & 0 & 1 & 0 & 0 & 0 & 1 & 0 & 0 \\
					E2 & 0 & 1 & 0 & 1 & 0 & 1 & 0 & 1 & 0 & 0 & 0 & 1 & 0 & 0 \\
					E3 & 0 & 1 & 0 & 1 & 0 & 1 & 0 & 1 & 0 & 0 & 0 & 1 & 0 & 0 \\
					\hline
				\end{tabular}
				\caption{Adjacency matrix of the $\Phi_\omega$-minimizing graph}\label{tab3}
			\end{table}
Then $h_{top}(\overline{X}(\Phi_\omega))$	and $h_{top}(\underline{X}(\Phi_\omega))$ are the logarithm of the Perron-Frobenius eigenvalue of the adjacency matrices, namely $\log 3$.
It follows that the supremal H\"older exponent of inverse conjugacies $h^{-1}_{t\omega}$ are bounded from below by
\[\frac{h_{top}(\overline{X}(\Phi_\omega))}{h_{top}(X)}= \frac{h_{top}(\underline{X}(\Phi_\omega))}{h_{top}(X)}=\frac{\log 3}{\log\theta_0}\approx 0.64.\]

			\begin{figure}[h!]
%				\centering
				\includegraphics[width=0.7\linewidth]{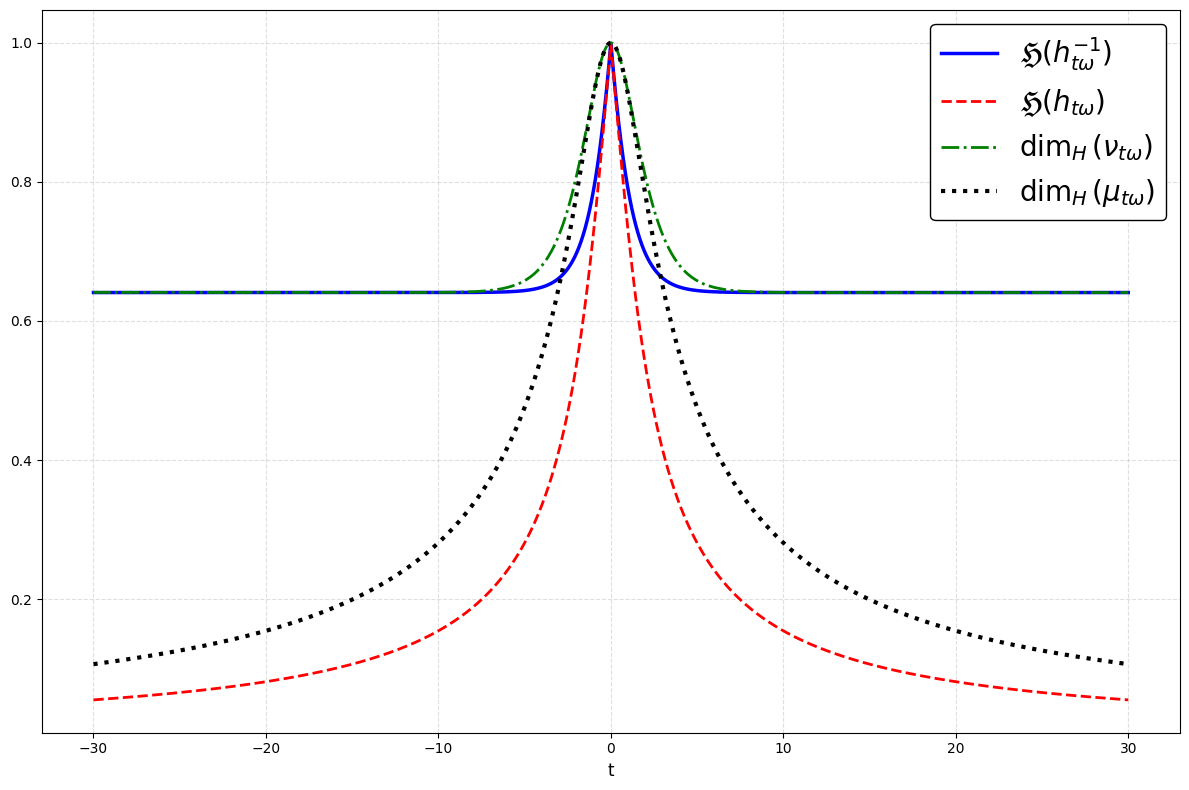}
				\caption{Plots showing the behavior of Hausdorff dimensions and H\"older exponents as the parameter $t$ varies}\label{fig:placeholder}
			\end{figure}
To conclude, in Figure~\ref{fig:placeholder} we present plots showing the dependence of the Hausdorff dimensions and the supremal H\"older exponents on the parameter $t$.
The reader may easily notice the symmetry of all the graphs, which is not accidental. By applying fairly elementary arguments, this symmetry follows from the symmetry of the permutation $\pi$ defining the IET $T$.		

\section*{Acknowledgements}
\noindent
The first author was partially supported by the Narodowe Centrum Nauki Grant 2025/57/B/ST1/00704.
The second author was partially supported by the Narodowe Centrum Nauki Grant   2023/50/O/ST1/00045.

		\end{document}